\newcommand{\abs}[1]{\lvert#1\rvert}
\newcommand{\Abs}[1]{\biggl\vert{#1}\biggr\vert}
\theoremstyle{plain}
\newtheorem{theorem}{Theorem}
\newtheorem{corollary}{Corollary}
\newtheorem{lemma}{Lemma}
\newtheorem{proposition}{Proposition}
\theoremstyle{remark}
\newtheorem*{remark}{Remark}
\title[Bose--Einstein distribution and Bose condensation]
{On the Bose--Einstein distribution\\ and Bose condensation}
\subjclass[2000]{60C05 (Primary) 82B30 (Secondary)}
\author{V.P.~Maslov}
\address{Faculty of Physics, Moscow State University\newline\indent
Institute for Problems in Mechanics, Russian Academy of
Sciences}
\email{v.p.maslov@mail.ru}
\author{V.E.~Nazaikinskii}
\address{Institute for Problems in Mechanics, Russian Academy of
Sciences}
\email{nazaikinskii@yandex.ru}
\begin{document}

\begin{abstract}
For a system of identical Bose particles sitting on integer energy
levels, we give sharp estimates for the convergence of the sequence
of occupation numbers to the Bose--Einstein distribution and for
the Bose condensation effect.
\end{abstract}

\maketitle

\tableofcontents

\section{Introduction}\label{s1}

The Bose--Einstein distribution has been studied by physicists
(here we only mention the well-established textbooks by Landau and
Lifshits \cite{LaLi51} and Kvasnikov~\cite{Kva02}) as well as
mathematicians (e.g., see Vershik's papers \cite{Ver96,Ver97},
where further references can be found). It is well known that the
vector of occupation numbers tends, in a certain sense, to the
Bose--Einstein distribution as the total energy of the system and
the number of particles tend to infinity. It is, however, of
interest to establish sharp estimates for this convergence,
including the case of Bose condensation. This has been done in our
papers \cite{MN1-1,MN1-2,MN1-3,MN2-1,MN2-2}. The present exposition
is mainly based on these papers but contains a number of
improvements and has the important advantage of being largely
self-contained (at least as far as Bose condensation itself and
distributions with variable number of particles are concerned).
Moreover, in contrast to other mathematical treatments of the
subject, it does not use any but very elementary mathematical
tools. (For example, we avoid resorting to number-theoretic results
like the Meinardus theorem \cite{And76} or methods of analytic
number theory \cite{Kar04}.)

\medskip

Before proceeding to the results themselves, let us recall what
Bose condensation is by using an elementary combinatorial model,
namely, that of balls distributed over boxes.

Suppose that there is a sequence of boxes $U_j$, $j=0,1,2,\dots$,
and each box $U_j$ is divided into $q_j$ compartments. We take $N$
identical balls and put them into the boxes at random observing the
only condition that
\begin{equation}\label{e1}
    \sum_{j=0}^\infty jN_j\le M,
\end{equation}
where $N_j$ is the number of balls in the box $U_j$ and $M$ is a
positive integer specified in advance. As an outcome, we obtain a
sequence of nonnegative integers $N_j$, $j=0,1,2,\dots$, such that
\begin{equation}\label{e2}
    \sum_{j=0}^\infty N_j=N
\end{equation}
and condition \eqref{e1} is satisfied. It is easily seen that,
given $M$ and $N$, there are finitely many such sequences. Suppose
that all allocations of balls to compartments are equiprobable.
Since the number of ways to distribute $N_j$ indistinguishable
balls over $q_j$ compartments is equal to
\begin{equation*}
    \binom{N_j+q_j-1}{N_j}
    =\frac{(q_j+N_j-1)!}{N_j!(q_j-1)!},
\end{equation*}
it follows that each sequence $\{N_j\}$ can be realized in
\begin{equation}\label{e3}
  W(\{N_j\})=\prod_{j=0}^{\infty}\binom{N_j+q_j-1}{N_j}
\end{equation}
ways, and the probability of this sequence is equal to $W(\{N_j\})$
divided by the sum of expressions similar to \eqref{e3} over all
sequences of nonnegative integers satisfying the constraints
\eqref{e1} and \eqref{e2}. This makes the set of all such sequences
a probability space; the corresponding probability measure will be
denoted by $\mathsf{P}_{M,N}$. The positive integers $q_j$ are
called the \textit{multiplicities}. We will assume that
\begin{equation}\label{e4}
    q_j=Qj^{d-1}+o(j^{d-1}), \quad j\to\infty,
\end{equation}
where $d>1$ is a given parameter (which we refer to as
\textit{dimension}) and $Q\ge1$ is a positive constant.

What happens as $M,N\to\infty$? It turns out that the so-called
\textit{condensation phenomenon} can occur: if $N$ tends to
infinity too rapidly, namely, if it exceeds some threshold
$\overline{N}=\overline{N}(M)$, then a majority of the excessive
$N-\overline{N}$ balls end up landing in the box $U_0$; more
precisely, with probability asymptotically equal to $1$, the number
of balls in $U_0$ is close to $N-\overline{N}$ (and accordingly,
the total number of balls in all the other boxes is close to
$\overline{N}$, now matter how large $N$ itself is). Let us state
the corresponding assertion (which is a special case of
Corollary~\ref{cor4} and the subsequent argument in
Sec.~\ref{3.2.2}).
\begin{theorem}\label{t0}
Define $\overline{N}=\overline{N}(M)$ by the formula
\begin{equation}\label{e5}
    \overline{N}=\sum_{j=1}^\infty\frac{q_j}{e^{bj}-1},
\end{equation}
where $b$ is the unique positive root of the equation
\begin{equation}\label{e6}
    \sum_{j=1}^\infty\frac{jq_j}{e^{bj}-1}=M,
\end{equation}
and set
\begin{equation*}
 \Delta=\begin{cases}
         (\overline{N}\ln\overline{N})^{1/2}\chi(\overline{N})&\text{if $d>2$},\\
          \overline{N}^{1/d}\ln\overline{N}\chi(\overline{N})&\text{if $1<d\le 2$},
        \end{cases}
\end{equation*}
where $\chi(x)$, $x\ge0$, is an arbitrary positive function
arbitrarily slowly tending to infinity as $x\to\infty$. There exist
constants $C_m$, independent of $M$ and $N$, such that if $N>\overline{N}$,
then
\begin{equation*}
    \mathsf{P}_{M,N}(\vert N_0-(N-\overline{N})\vert>\Delta)\le C_m\overline{N}^{-m},
    \qquad m=1,2,\dotsc\,.
\end{equation*}
\end{theorem}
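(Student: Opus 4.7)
Since $\sum_{j\ge 0}N_j=N$, setting $S:=\sum_{j\ge 1}N_j$ we have $N_0-(N-\overline{N})=\overline{N}-S$, so the assertion reduces to the two-sided concentration bound $\mathsf{P}_{M,N}(|S-\overline{N}|>\Delta)\le C_m\overline{N}^{-m}$ on the single random variable $S$.

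The natural framework is a grand canonical comparison. Introduce on $\mathbb{Z}_{\ge 0}^{\{1,2,\dots\}}$ the product measure $\widetilde{\mathsf{P}}$ under which the coordinates $N_j$, $j\ge 1$, are independent Bose--Einstein variables with parameter $b$, namely $\widetilde{\mathsf{P}}(N_j=k)=(1-e^{-bj})^{q_j}\binom{k+q_j-1}{k}e^{-bjk}$. By \eqref{e5}--\eqref{e6}, this choice of $b$ makes $\widetilde{\mathsf{E}}\bigl[\sum_{j\ge 1}jN_j\bigr]=M$ and $\widetilde{\mathsf{E}}[S]=\overline{N}$. Marginalising $N_0=N-S$ out of $\mathsf{P}_{M,N}$ yields a measure on $(N_1,N_2,\dots)$ whose Radon--Nikodym derivative against $\widetilde{\mathsf{P}}$ is proportional to
\[
\binom{N-S+q_0-1}{N-S}\;\exp\Bigl(b\sum_{j\ge1}jN_j\Bigr)\cdot\mathbf{1}\bigl[S\le N,\;\textstyle\sum jN_j\le M\bigr].
\]
The problem is therefore to show that neither this reweighting nor the two constraints spoil the concentration of $S$ around $\overline{N}$ at the scale $\Delta$.

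The concentration of $S$ and of $\sum jN_j$ under $\widetilde{\mathsf{P}}$ is where Corollary~\ref{cor4} should do the work: independence reduces the moment generating function of $S$ to a product over $j$, whose asymptotic analysis under \eqref{e4} yields subgaussian-type tail bounds. A direct computation using $b\asymp\overline{N}^{-1/d}$ gives $\mathrm{Var}\,S\asymp\overline{N}$ for $d>2$ and $\mathrm{Var}\,S\asymp\overline{N}^{2/d}$ for $1<d<2$ (with the logarithmic correction $\overline{N}\log\overline{N}$ at $d=2$). Thus $\Delta$ exceeds the standard deviation of $S$ by a slowly growing factor $\sqrt{\log\overline{N}}\,\chi(\overline{N})$ or $\log\overline{N}\,\chi(\overline{N})$, and a Chernoff estimate converts this into polynomial decay $\overline{N}^{-m}$ for every $m$. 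The one-sided constraint $\sum jN_j\le M$ discards only exponentially small mass by the same reasoning applied to $\sum jN_j$, and the exponential tilt $e^{b\sum jN_j}$, combined with the truncation at $M$, reduces to a bounded multiplicative factor.

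The main obstacle is the binomial prefactor $\binom{N-S+q_0-1}{N-S}$, whose logarithmic derivative in $S$ is of order $1/(N-S)$. Over the window $|S-\overline{N}|\le\Delta$ this factor varies by $\exp\bigl(O(\Delta/(N-\overline{N}))\bigr)$, which is harmless deep in the condensation regime but requires delicate accounting when $N-\overline{N}$ is only slightly positive; it is presumably the subsequent argument in Sec.~\ref{3.2.2} that handles this interface, patching the grand canonical tail bound onto the canonical ensemble uniformly in $N>\overline{N}$. Once this multiplicative distortion is quantified, the theorem follows by combining the grand canonical tail estimate with the Radon--Nikodym identity displayed above.
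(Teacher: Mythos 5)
Your reduction of the event $\{|N_0-(N-\overline{N})|>\Delta\}$ to $\{|S-\overline{N}|>\Delta\}$ with $S=\sum_{j\ge1}N_j$ is exactly how the paper starts, and your Radon--Nikodym identity against the independent negative-binomial measure $\widetilde{\mathsf{P}}$ is algebraically correct: the density against $\widetilde{\mathsf{P}}$ is indeed proportional to $\binom{N-S+q_0-1}{N-S}e^{b\sum jN_j}\mathbf{1}[S\le N,\sum jN_j\le M]$, and your variance asymptotics for $S$ match the paper's $K$ and explain the shape of $\Delta$. The paper takes a closely related tilting route, but with $\mathsf{P}_M$ (the truncated, weighted measure on $\Omega_M$) rather than the fully unconstrained $\widetilde{\mathsf{P}}$ as the intermediate object: Theorem~\ref{t1} is proved first, and Theorem~\ref{t2} is deduced by comparing $\mathsf{P}_{M,N}$ to $\mathsf{P}_M$ via the map $\tau$ that drops $N_0$.

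There are, however, two genuine gaps, one of which you partially acknowledge.

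First, a Chernoff/MGF argument under $\widetilde{\mathsf{P}}$ only gives upper tail bounds; it does not furnish the \emph{lower} bound on the normalization $\widetilde{\mathsf{E}}[\rho]$ (equivalently, on $W(\Omega_{M,N})$) that the ratio $\mathsf{P}_{M,N}(\cdot)=W(\Omega_{M,N}(\Delta))/W(\Omega_{M,N})$ demands. The paper supplies this separately: Proposition~\ref{p1} is a saddle-point (local-CLT) estimate $w(\Omega_M)\ge Cb^{d/2+1}e^{\mathcal{S}(M)}$ obtained by a contour integral, not from any tilting bound. Your plan has no analogue of this step, and without it the ``$\overline{N}^{-m}$ for every $m$'' conclusion does not follow.

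Second, and more seriously, your proposed handling of the binomial prefactor cannot work as described. You note that over the window $|S-\overline{N}|\le\Delta$ the factor $\binom{N-S+q_0-1}{N-S}\asymp(N-S)^{q_0-1}$ varies by $\exp\bigl(O(\Delta/(N-\overline{N}))\bigr)$, and you suggest ``quantifying'' this distortion. But when $N-\overline{N}$ is of order $1$ (which the hypothesis $N>\overline{N}$ allows), the distortion is $\exp(O(\Delta))$ and swamps any polynomial rate; no amount of quantification rescues a window-based comparison. The paper avoids this entirely by splitting into $N>\overline{N}+\Delta$ and $\overline{N}<N\le\overline{N}+\Delta$. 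In the first regime it is enough that $\tau(\Omega_{M,N})$ carries almost all the mass of $\Omega_M$ (Corollary~\ref{cor1} applied to $\mathsf{P}_M$). In the second regime it lower-bounds $W(\Omega_{M,N})$ by restricting to the single slice $\{S=\overline{N},\ \sum jN_j=M\}$, namely $W(\Omega_{M,N})\ge\binom{N-\overline{N}+q_0-1}{N-\overline{N}}\,w(\Omega'_{M,\overline{N}})$, and then invokes the local estimate $w(\Omega'_{M,\overline{N}})\ge Ce^{\mathcal{S}(M)}\overline{N}^{-m_0}$ from~\cite{MN2-2}. Because this is a pointwise lower bound at one value of $S$, the binomial factor appears only once and cancels favourably against the $N^{q_0-1}$ in the numerator; there is no need to control its variation across the window. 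That local lower bound (a two-constraint local CLT) is the missing ingredient your plan would have to produce.
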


Nothing of this sort happens if $N\le \overline{N}$. In this case,
the limit distribution as $M,N\to\infty$ is the Bose--Einstein
distribution with parameters $\beta,\mu>0$ (see formulas \eqref{v7}
and \eqref{v7a} below), and no condensation on the zero level
occurs.

It is not hard to write out an asymptotic formula
for~$\overline{N}$. To this end, one substitutes \eqref{e4} into
\eqref{e5} and \eqref{e6} and applies the Euler--Maclaurin formula
to the resulting series so as to transform them into integrals. The
result (see formula~\eqref{p0} in Sec.~\ref{3.1.2}) is that
\begin{equation}\label{e11}
    \overline{N}=C(d)M^{\frac d{d+1}}Q^{\frac1{d+1}}
    (1+o(1)),
\end{equation}
where $C(d)$ is a constant\footnote{The explicit expression is
\begin{equation*}
    C(d)=\frac{\Gamma(d)\zeta(d)}
    {(\Gamma(d+1)\zeta(d+1))^{\frac d{d+1}}},
\end{equation*}
where  $\Gamma(x)$ is the gamma function and $\zeta(x)$ is the
Euler zeta function.
}
depending only on the dimension~$d$.

\medskip

Suppose that, for given $M$ and $N>\overline{N}$, we wish to avoid
Bose condensation. How can we do that?

One way would be to increase $\overline{N}$ so as to ensure that
$\overline{N}\ge N$. To this end, let us partly ``Boltzmannize''
the system, i.e., make the balls partly distinguishable. More
precisely, suppose that the model is basically the same, but we are
additionally allowed to paint each of the $N$ balls at random into
one of $K$ distinct colors. Now that we can distinguish between
balls of different colors (but balls of a same color are still
indistinguishable), the Bose condensation threshold $\overline{N}$
should change.

Let us compute how exactly it changes. To make the computation,
instead of painting the balls, we mentally divide each of the $q_j$
compartments in the $j$th box into $K$ sub-compartments and put the
uncolored balls there (with the understanding that putting a ball
into the $k$th sub-compartment is equivalent to painting the ball
into the $k$th color). Now there are $Kq_j$ compartments in the
$j$th box, and we see that, all in all, the introduction of $K$
colors has the only effect that all multiplicities $q_j$ are
multiplied by $K$.

Let us apply Theorem~\ref{t0} (with $q_j$ replaced by the new
multiplicities $\widetilde q_j=Kq_j$). Formula~\eqref{p0} gives an
asymptotic expression for the new threshold, which we denote by
$\widetilde N$. All we have to do is to replace $Q$ by $KQ$ in
formula~\eqref{e11}; then we obtain
\begin{equation*}
    \widetilde N=C(d)M^{\frac d{d+1}}(KQ)^{\frac1{d+1}}(1+o(1))
         =K^{\frac1{d+1}}\overline{N}(1+o(1)).
\end{equation*}

Thus, the introduction of $K$ distinct colors has raised
$\overline{N}$ by the factor $K^{\frac1{d+1}}$.

\section[Main Results]%
{Main Results}\label{s3}

In this section, we state our main results. All proofs are given in
Sec.~\ref{s4}. The simplest physical model to imagine behind our
mathematical constructions is that of a system of identical Bose
particles sitting on integer energy levels.

\subsection{System with a variable number of particles}\label{3.1}

We start our analysis by considering the ``photonic'' case, where
the number of particles in the system is not fixed and only a
constraint on the overall system energy is given.

\subsubsection{Definition of the system}\label{3.1.1}
Let $M\ge0$ be an integer. We denote the set of all sequences
$\{N_j\}\equiv\{N_j\}_{j=1}^\infty$ of nonnegative integers
satisfying the condition
\begin{equation}\label{f2}
    \sum_{j=1}^{\infty} jN_j\le M
\end{equation}
by~$\Omega_M$. Note that all such sequences are finitely
supported\footnote{That is, all but finitely many $N_j$ are zero.
Indeed, $N_j=0$ for $j>M$.} and $\Omega_M$ is finite.

Next, let positive real numbers $q_j>0$, $j=1,2,\dotsc$, be given.
We introduce the probability space $\mathcal{X}_M=(\Omega_M,\mathcal{F}_M,\mathsf{P}_M)$,
where $\mathcal{F}_M=2^{\Omega_M}$ is the powerset of~$\Omega_M$ (as is customary
with discrete probability spaces) and the probability~$\mathsf{P}_M$ is
defined as follows. We assign the weight
\begin{equation}\label{f3}
    w(\{N_j\})=\prod_{j=1}^{\infty}\binom{N_j+q_j-1}{N_j},
\end{equation}
where $\binom zn$ is the generalized binomial
coefficient,\footnote{Recall that $\binom
zn=\prod_{j=0}^{n-1}\frac{z-j}{n-j}$, the empty product (for $n=0$)
being by definition equal to~$1$. Since $\{N_j\}$ is finitely
supported, it follows that only finitely many factors in \eqref{f3}
are different from~$1$.} to each element $\{N_j\}\in\Omega_M$ and the
weight
\begin{equation}\label{f9d}
    w(\mathcal{A})=\sum_{\{N_j\}\in\mathcal{A}}w(\{N_j\})
\end{equation}
to each subset~$\mathcal{A}\subset\Omega_M$ and set\footnote{In what follows,
we also feel free to write the condition determining the set~$\mathcal{A}$
instead of the argument $\mathcal{A}$ itself in expressions like
$\mathsf{P}_M(\mathcal{A})$.}
\begin{equation}\label{f4}
    \mathsf{P}_M(\mathcal{A})=\frac{ w(\mathcal{A})}{w(\Omega_M)},\qquad \mathcal{A}\subset\Omega_M.
\end{equation}

The numbers $q_j$ will be referred to as
\textit{multiplicities}.\footnote{If the $q_j$ are integers, then
$\binom{N_j+q_j-1}{N_j}$ is exactly the number of ways in which $N_j$
indistinguishable particles can be placed on an energy level of
multiplicity $q_j$, and $w(\{N_j\})$ is the number of distinct
system states corresponding to the sequence $\{N_j\}$ of occupation
numbers.} We assume that they have the asymptotics\footnote{The
asymptotics~\eqref{f1a} with $Q=\Gamma(d)^{-1}$ holds, for example,
for the multiplicities $q_j=\binom{j+d-1}{j}$ of energy levels of
the $d$-dimensional quantum-mechanical harmonic oscillator.}
\begin{equation}\label{f1a}
    q_j=Qj^{d-1}(1+o(1))\qquad\text{as $j\to\infty$}
\end{equation}
for some real constants $d>1$ and $Q>0$. In particular, there
exists constants $B_1,B_2>0$ such that\footnote{Most of the results
stated below, except for some explicit formulas like~\eqref{p0},
remain valid if we drop~\eqref{f1a} and only require that the
estimates~\eqref{f1} be true.}
\begin{equation}\label{f1}
    B_1j^{d-1}\le q_j\le B_2j^{d-1},\quad j=1,2,\dotsc\,.
\end{equation}

\subsubsection{Limit distribution}\label{3.1.2}

It turns out that if $M$ is large, then a randomly chosen sequence
$\{N_j\}\in\Omega_M$ is with high probability close, in the sense
described in Theorem~\ref{t1} below, to the nonrandom sequence
$\{\overline{N}_j\}$ defined as follows. For $M>0$, the equation
\begin{equation}\label{f5}
    \sum_{j=1}^{\infty}\frac{jq_j}{e^{bj}-1}=M
\end{equation}
has a unique solution $b>0$ (which tends to zero as
$M\to\infty$),\footnote{Indeed, it follows from~\eqref{f1} that the
series on the left-hand side in \eqref{f5} converges for all $b>0$;
moreover, the sum of this series is easily seen to be a function
monotone decreasing from $\infty$ to $0$ as $b$ goes from $0$ to
$\infty$.} and we set
\begin{equation}\label{f6}
    \overline{N}_j=\frac{q_j}{e^{bj}-1}.
\end{equation}

In the theorem below, we also need the sum
\begin{equation}\label{f7}
    \overline{N}=\sum_{j=1}^{\infty}\overline{N}_j\equiv\sum_{j=1}^{\infty}\frac{q_j}{e^{bj}-1},
\end{equation}
which tends to infinity together with $M$. Take a positive function
$\chi(x)$, $x\ge0$, tending (arbitrarily slowly) to infinity as
$x\to\infty$ and set\footnote{One should bear in mind that $b$,
$\overline{N}_j$, $\overline{N}$, and $\Delta$ are functions of $M$, even though we do
not always write~out the~argument $M$ explicitly.}
\begin{equation}\label{f8}
    \Delta=\begin{cases}
    (\overline{N}\ln\overline{N})^{1/2}\chi(\overline{N})&\text{if $d>2$},\\
    \overline{N}^{1/d}\ln\overline{N}\chi(\overline{N})&\text{if $1<d\le 2$}.
    \end{cases}
\end{equation}

Now we are in a position to state our assertions.
\begin{theorem}\label{t1}
Let the numbers $q_j$ satisfy \eqref{f1a}. Then there exist
constants $C_s>0$, $s=1,2,\dotsc$, such that the estimates
\begin{equation}\label{f9}
    \mathsf{P}_M\biggl(\Abs{\sum_{j=1}^{\infty} f_j(N_j-\overline{N}_j)}>\Delta\biggr)\le C_s\overline{N}^{-s},
    \qquad s=1,2,\dots,
\end{equation}
where $\overline{N}_j$, $\overline{N}$, and $\Delta$ are defined in \eqref{f6},
\eqref{f7}, and \eqref{f8}, hold for an arbitrary $M>0$ and an
arbitrary sequence $\{f_j\}$, $j=1,2,\dotsc$, of complex numbers
satisfying the condition $\sup_j \abs{f_j}\le1$.
\end{theorem}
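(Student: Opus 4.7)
\emph{Proof plan.} The strategy is to pass from the energy-constrained measure $\mathsf{P}_M$ to a ``grand canonical'' product measure $\mathsf{Q}_b$ under which the occupation numbers become independent negative-binomial random variables, and then apply a Chernoff-type large-deviation estimate. Under $\mathsf{Q}_b$ the $\tilde N_j$ are declared independent with
\[
  \mathsf{Q}_b(\tilde N_j = n) = \binom{n + q_j - 1}{n}\,e^{-bjn}(1 - e^{-bj})^{q_j},\qquad n=0,1,2,\dotsc,
\]
so that $\mathsf{E}_{\mathsf{Q}_b}\tilde N_j=\overline N_j$ and $\mathsf{E}_{\mathsf{Q}_b}S=M$ for $S:=\sum_j j\tilde N_j$, where $b$ is the solution of~\eqref{f5}. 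Setting $\Xi(b)=\prod_j(1-e^{-bj})^{-q_j}$, the weight \eqref{f3} rewrites as $w(\{N_j\})=\Xi(b)\,e^{bS(\{N_j\})}\,\mathsf{Q}_b(\{N_j\})$, so for every $\mathcal A\subset\Omega_M$,
\[
  \mathsf{P}_M(\mathcal A)=\frac{\mathsf{E}_{\mathsf{Q}_b}[\mathbf{1}_{\mathcal A\cap\{S\le M\}}\,e^{bS}]}{\mathsf{E}_{\mathsf{Q}_b}[\mathbf{1}_{\{S\le M\}}\,e^{bS}]}\ \le\ \frac{e^{bM}\,\mathsf{Q}_b(\mathcal A)}{\mathsf{E}_{\mathsf{Q}_b}[\mathbf{1}_{\{S\le M\}}\,e^{bS}]}.
\]
An elementary local-limit estimate for the negative-binomial convolution $S$ (whose characteristic function under $\mathsf{Q}_b$ is explicit) gives the matching lower bound $\mathsf{E}_{\mathsf{Q}_b}[\mathbf{1}_{\{S\le M\}}\,e^{bS}]\ge c_0\,e^{bM}/(b\sigma_S)$, where $\sigma_S^2=\mathrm{Var}_{\mathsf{Q}_b}S$; a routine calculation using \eqref{f1} yields $b\sigma_S=O(\overline N^{1/2})$, so $\mathsf{P}_M(\mathcal A)\le C\overline N^{1/2}\mathsf{Q}_b(\mathcal A)$, and this polynomial prefactor will be harmless against the exponential decay produced below.

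Splitting $\{f_j\}$ into real and imaginary parts reduces matters to real sequences with $|f_j|\le 1$, and applying Markov to $e^{\pm tX}$, where $X=\sum_j f_j(\tilde N_j-\overline N_j)$, gives $\mathsf Q_b(|X|>\Delta)\le 2\inf_{|t|<b}e^{-t\Delta+\psi(t)}$ with explicit cumulant generating function
\[
  \psi(t)=\log\mathsf{E}_{\mathsf{Q}_b}e^{tX}=\sum_j\biggl[q_j\log\frac{1-e^{-bj}}{1-e^{-bj+tf_j}}-tf_j\,\overline N_j\biggr].
\]
Taylor expansion about $t=0$ gives $\psi(t)\le\tfrac12 t^2V$ for $|t|\le b/2$, with $V=\sum_j f_j^2\sigma_j^2$ and $\sigma_j^2=q_je^{bj}/(e^{bj}-1)^2$. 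Straightforward bookkeeping with \eqref{f1} establishes $V\le C\overline N$ when $d>2$, $V\le C\overline N\log\overline N$ when $d=2$, and $V\le C\overline N^{2/d}$ when $1<d<2$. The optimal choice $t=\Delta/V$ (valid provided $t\le b/2$) then yields the Gaussian-type bound $\mathsf Q_b(|X|>\Delta)\le 2e^{-\Delta^2/(2V)}$, which with $\Delta$ as in \eqref{f8} equals $\overline N^{-c\chi(\overline N)^2}$ for $d>2$ and is of order $\exp(-c\chi(\overline N)^2(\log\overline N)^\alpha)$ for $d\le 2$ with $\alpha\in\{1,2\}$; each of these is $o(\overline N^{-s})$ for every $s$, absorbing the prefactor $\overline N^{1/2}$ from the first step.

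The main obstacle is the case $1<d\le 2$: there the unrestricted Chernoff optimum $t=\Delta/V\sim\overline N^{-1/d}\log\overline N\,\chi$ exceeds the convergence threshold $b\sim\overline N^{-1/d}$, because a single summand $\tilde N_1-\overline N_1$ has a geometric tail of scale $1/b\sim\overline N^{1/d}$, comparable to $\Delta$ itself. To handle this, I would split $X=X'+X''$ with $X'$ collecting a bounded number of low-index terms $j\le J_0$ (chosen so that $bJ_0\le 1$) and $X''$ the rest. The quadratic Chernoff argument above applies to $X''$ uniformly, while each summand of $X'$ is controlled by the sharp one-variable tail estimate $\mathsf Q_b(|\tilde N_j-\overline N_j|>y)\le C(1+bjy)^{q_j}e^{-bjy}$, obtained from Chernoff with $t$ pushed to the boundary value $t=bj-q_j/y$. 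For $y\sim\Delta/J_0$ this yields a bound of order $\overline N^{-c\chi(\overline N)}$, which dominates any $\overline N^{-s}$. Pulling back through the first step then produces~\eqref{f9}.
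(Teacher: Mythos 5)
Your proposal is in essence the paper's proof rewritten in the language of a tilted product (``grand canonical'') measure: your $\mathsf{Q}_b$ is exactly the measure implicit in the paper's saddle-point argument, your cumulant bound $\psi(t)\le\tfrac12t^2V$ is Proposition~\ref{p2} (the paper's $e^{-c\Delta+c^2K}$ estimate), and your local-limit lower bound on $\mathsf{E}_{\mathsf{Q}_b}[\mathbf{1}_{\{S\le M\}}e^{bS}]$ corresponds to the paper's saddle-point lower bound $w(\Omega_M)\ge Cb^{d/2+1}e^{\mathcal S(M)}$ (Proposition~\ref{p1}), which the paper proves by deforming the contour to $\operatorname{Re}\xi=b$ and estimating in three zones; this step carries the same weight in either formulation and cannot be waved through as a ``routine calculation.'' So the underlying estimates are the same; your framing is a bit more conceptual but buys no simplification of the work.

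Two corrections to the details. First, the ``main obstacle'' you identify for $1<d\le2$ and the ensuing split $X=X'+X''$ are unnecessary: as the paper shows, one simply takes the tilting parameter at the boundary $t=b/2$. Then $t\Delta\asymp\tfrac12\ln\overline N\,\chi(\overline N)$ (since $b\asymp\overline N^{-1/d}$), while $\tfrac12t^2V$ is $O(\ln\overline N)$ for $d=2$ and $O(1)$ for $1<d<2$; the difference still tends to infinity like $\ln\overline N\,\chi(\overline N)$, which already beats every polynomial after absorbing the $\overline N^{1/2}$ prefactor. Your separate treatment of the low-index terms introduces real complications (uniformity over the choice of $J_0$, combining the two tails) that the direct boundary choice avoids. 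Second, the bound $\psi(t)\le\tfrac12t^2V$ with $V=\sum_j f_j^2q_je^{bj}/(e^{bj}-1)^2$ (the $\mathsf{Q}_b$-variance at $t=0$) is not quite what the Taylor remainder gives you: the second derivative is evaluated at an intermediate point, so the effective exponent is $bj-\theta_jtf_j$ with $\theta_j\in[0,1]$, which for $|t|\le b/2$ forces you to replace $V$ by the larger quantity $\sum_j f_j^2 q_j e^{bj/2}/(e^{bj/2}-1)^2$ (precisely the paper's $K$). This does not change the order of magnitude, but it should be stated correctly, since otherwise the inequality is false as written.
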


By taking $f_j=0$ for $j<l$ and $f_j=1$ for $j>l$ in~\eqref{f9}, we
obtain the following assertion.

\begin{corollary}\label{cor1}
Under the assumptions of the theorem, for arbitrary integer $l\ge1$
one has
\begin{equation*}
    \mathsf{P}_M\biggl(\Abs{\sum_{j=l}^{\infty}N_j-\sum_{j=l}^{\infty}\overline{N}_j}
               >\Delta\biggr)\le C_s\overline{N}^{-s},
    \qquad s=1,2,\dots.
\end{equation*}
\end{corollary}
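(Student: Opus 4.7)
The plan is to invoke Theorem~\ref{t1} directly with a carefully chosen deterministic sequence $\{f_j\}$ of the ``indicator'' type, as hinted in the sentence preceding the corollary. Specifically, I would set
\begin{equation*}
    f_j=\begin{cases} 0 & \text{if $j<l$,}\\ 1 & \text{if $j\ge l$.}\end{cases}
\end{equation*}
This sequence has $\sup_j\abs{f_j}=1\le 1$, so it falls within the hypotheses of Theorem~\ref{t1}.

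Next I would unwind the random sum. Because every $\{N_j\}\in\Omega_M$ is finitely supported (all $N_j$ with $j>M$ vanish), the ``infinite'' sum $\sum_{j=1}^{\infty}f_j N_j$ is in fact the finite sum $\sum_{j=l}^{\infty}N_j=\sum_{l\le j\le M}N_j$, so no convergence issue arises on the random side. On the deterministic side, the convergence of $\sum_{j=l}^\infty \overline{N}_j$ is guaranteed by the definition of $\overline{N}$ in~\eqref{f7}. Substituting the chosen $\{f_j\}$ into~\eqref{f9} therefore gives exactly
\begin{equation*}
    \sum_{j=1}^\infty f_j(N_j-\overline{N}_j)=\sum_{j=l}^\infty N_j-\sum_{j=l}^\infty \overline{N}_j,
\end{equation*}
and the estimate~\eqref{f9} becomes the desired estimate.

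There is essentially no obstacle here: the corollary is a direct specialization of the theorem, and the only thing to check is that the substitution is legitimate, i.e.\ that the finiteness of the support of $\{N_j\}$ and the convergence of $\sum\overline{N}_j$ allow us to reinterpret $\sum f_j(N_j-\overline{N}_j)$ as the stated difference of two tail sums. The constants $C_s$ in the corollary may simply be taken to be the same as those furnished by Theorem~\ref{t1}, independently of $l$.
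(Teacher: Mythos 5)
Your proposal is correct and matches the paper's own (one-line) derivation: the corollary is obtained by substituting the indicator sequence $f_j=\mathbf{1}_{\{j\ge l\}}$ into the estimate~\eqref{f9} of Theorem~\ref{t1}. Your remarks on finite support and on the convergence of $\sum_{j\ge l}\overline{N}_j$ are the right sanity checks, and the constants $C_s$ indeed carry over unchanged and independent of~$l$.
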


Essentially, Theorem~\ref{t1} and Corollary~\ref{cor1} say that
for, large $M$, a random element $\{N_j\}$ in $\mathcal{X}_M$ is well
approximated by the Bose--Einstein distribution~\eqref{f6} with
parameter $b$. The following closed-form asymptotic expressions
relating $M$, $b$, $\overline{N}$, and the cumulative distribution
$\sum_{j\ge l}\overline{N}_j$ hold as $M\to\infty$ by
Proposition~\ref{euler1}, (i) in Sec.~\ref{sapp}:
\begin{equation}\label{p0}
    \begin{split}
M&=b^{-d-1}Q\Gamma(d+1)\zeta(d+1)(1+o(1)),\\
    \overline{N}&=b^{-d}Q\Gamma(d)\zeta(d)(1+o(1))\quad
\text{and hence}\\
    \overline{N}&=M^{\frac d{d+1}}Q^{\frac1{d+1}}\frac{\Gamma(d)\zeta(d)(1+o(1))}
    {(\Gamma(d+1)\zeta(d+1))^{\frac d{d+1}}},\\
    \sum_{j=l}^{\infty}\overline{N}_j&=Qb^{-d}\int_{bl}^\infty\frac{x^{d-1}\,dx}{e^x-1}
    +o(b^{-d}).
\end{split}
\end{equation}

The last formula in~\eqref{p0} reveals the role of the
nondimensionalized Bose--Einstein distribution function
\begin{equation}\label{fc3}
    \phi(x)=\frac{x^{d-1}}{e^x-1}.
\end{equation}
It is no surprise that, being appropriately normalized, the random
elements $\{N_j\}\in\Omega_M$, in a sense, tend as $M\to\infty$ to the
function~\eqref{fc3}. More precisely, let us define a sequence of
independent random functions $\phi_M(x)$, $M=1,2,\dotsc$, on the
positive real line $\mathbb{R}_+$ by setting
\begin{equation*}
    \phi_M(x)=Q^{-1}b^{d-1}N_j,\qquad x\in[b(j-1),bj),\quad
    j=1,2,\dotsc\,,
\end{equation*}
where $M$ and $b$ are related by \eqref{f5} and $\{N_j\}\in\Omega_M$
is a random element of the probability space $\mathcal{X}_M$. Then the
following assertion holds.
\begin{corollary}\label{cor2}
The sequence $\{\phi_M\}$ almost surely $*$-weakly converges in the
space $(C^1(\mathbb{R}_+))^*$ of continuous linear functionals on the
space $C^1(\mathbb{R}_+)$ to the function~\eqref{fc3}. Namely, for each
differentiable function $f(x)$, $x\in\mathbb{R}_+$, bounded together with
its first derivative uniformly on $\mathbb{R}_+$, one has
\begin{equation*}
    \langle\phi_M,f\rangle
    \xrightarrow{\text{a.s.}}\langle\phi,f\rangle,
\end{equation*}
where the angle brackets denote the pairing
\begin{equation*}
    \langle u,v\rangle=\int_0^\infty u(x)v(x)\,dx.
\end{equation*}
\end{corollary}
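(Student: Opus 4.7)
The plan is to fix a function $f$ on $\mathbb{R}_+$ bounded together with its first derivative, set $M_0=\sup_{\mathbb{R}_+}|f|$, rewrite $\langle\phi_M,f\rangle$ as a linear statistic in $\{N_j\}$, and apply Theorem~\ref{t1} followed by a Riemann-sum identification of the deterministic limit. By definition of $\phi_M$,
\begin{equation*}
\langle\phi_M,f\rangle
=\sum_{j=1}^{\infty}Q^{-1}b^{d-1}N_j\int_{b(j-1)}^{bj}f(x)\,dx
=Q^{-1}b^{d}M_0\sum_{j=1}^{\infty}g_jN_j,
\end{equation*}
where $g_j=(bM_0)^{-1}\int_{b(j-1)}^{bj}f(x)\,dx$ obeys $|g_j|\le1$. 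Theorem~\ref{t1} applied to $\{g_j\}$ then gives, for each $s$,
\begin{equation*}
\Bigl|\langle\phi_M,f\rangle-Q^{-1}b^{d}M_0\sum_{j=1}^{\infty}g_j\overline{N}_j\Bigr|\le Q^{-1}b^{d}M_0\Delta
\end{equation*}
with probability at least $1-C_s\overline{N}^{-s}$. Since $\overline{N}$ is of order $b^{-d}$ by~\eqref{p0}, the quantity $b^{d}\Delta$ equals $\chi(\overline{N})\sqrt{\ln\overline{N}/\overline{N}}$ for $d>2$ and $\chi(\overline{N})\overline{N}^{1/d-1}\ln\overline{N}$ for $1<d\le2$, both tending to zero for a sufficiently slowly growing $\chi$.

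Next I would identify the deterministic limit. Writing $\int_{b(j-1)}^{bj}f(x)\,dx=bf(bj)+O(b^{2}\|f'\|_\infty)$, using $\overline{N}_j=q_j/(e^{bj}-1)$ with $q_j=Qj^{d-1}(1+o(1))$, and summing, one obtains
\begin{equation*}
Q^{-1}b^{d}M_0\sum_{j}g_j\overline{N}_j
=Q^{-1}b^{d-1}\sum_{j}\overline{N}_j\int_{b(j-1)}^{bj}f(x)\,dx
=b\sum_{j=1}^{\infty}\phi(bj)f(bj)+o(1),
\end{equation*}
and the last expression is a Riemann sum converging to $\int_0^\infty\phi(x)f(x)\,dx=\langle\phi,f\rangle$, using the integrability of $\phi$ at $0$ (where $\phi(x)\sim x^{d-2}$ with $d>1$) and its exponential decay at infinity.

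Combining the two steps, for every $\varepsilon>0$ the event $\{|\langle\phi_M,f\rangle-\langle\phi,f\rangle|>\varepsilon\}$ has probability at most $C_s\overline{N}^{-s}$ once $M$ is large. Since $\overline{N}_M=CM^{d/(d+1)}(1+o(1))$ by~\eqref{p0}, the series $\sum_M\overline{N}_M^{-s}$ converges for $s>(d+1)/d$, and the Borel--Cantelli lemma on the product probability space $\prod_M\mathcal{X}_M$ (which realizes the $\phi_M$ as independent) delivers $\langle\phi_M,f\rangle\to\langle\phi,f\rangle$ almost surely. The chief technical difficulty is the Riemann-sum step in the regime $1<d<2$: the integrand $\phi(x)f(x)$ is unbounded near $0$, so the identification requires splitting the sum at $j=\lceil\delta/b\rceil$, bounding the small-$j$ block uniformly in $b$ by $\|f\|_\infty\int_0^\delta\phi(x)\,dx=o_\delta(1)$, and applying the standard Riemann-sum estimate to the large-$j$ block.
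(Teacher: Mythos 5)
Your proposal is correct and follows essentially the same strategy as the paper: express $\langle\phi_M,f\rangle$ as a bounded linear statistic of $\{N_j\}$ to which Theorem~\ref{t1} applies, identify the deterministic limit via an Euler--Maclaurin/Riemann-sum argument (Proposition~\ref{euler1}), and finish with Borel--Cantelli on the product space $\prod_M\mathcal{X}_M$. The one genuine (if minor) improvement over the paper's Lemma~\ref{le4} is your choice of test coefficients $g_j=(bM_0)^{-1}\int_{b(j-1)}^{bj}f$ rather than $f(bj)$: with your choice $\langle\phi_M,f\rangle=Q^{-1}b^dM_0\sum_j g_jN_j$ is an exact identity, so the random remainder $R_1(M)$ of Lemma~\ref{le4} (which the paper controls by a second appeal to Theorem~\ref{t1}, via Corollary~\ref{cor1}) never appears; all the error is deterministic. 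One small point you gloss over in the identification of the deterministic limit: besides the unboundedness of $\phi$ near $0$ for $1<d<2$, you also have to control the $(1+o(1))$ factor in $q_j=Qj^{d-1}(1+\theta_j)$ uniformly over small $j$; the paper handles this with the same "split at a fixed $k$" device you propose for the Riemann-sum step, and it works here for the same reason ($b^d\sum_{j\le k}\overline{N}_j=O(b^{d-1})\to0$), but it is a separate error source from the one you flag and should be mentioned explicitly.
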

The convergence to the limit distribution~\eqref{fc3} can also be
stated in a somewhat different manner, as was done by
Vershik~\cite{Ver96}.
\begin{corollary}[cf.~{\cite[Theorem~4.4]{Ver96}}]\label{cor3}
For any $\varepsilon>0$ and any closed interval $[x_1,x_2]$,
$0<x_1<x_2<\infty$, there exists an $M_0$ such that
\begin{equation*}
    \mathsf{P}_M\biggl(\sup_{x\in[x_1,x_2]}
    \Abs{Q^{-1}b^d\sum_{j>x/b}N_j-\int_x^\infty\phi(x)\,dx}>\varepsilon\biggr)<\varepsilon
\end{equation*}
for $M>M_0$.
\end{corollary}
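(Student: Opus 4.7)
The plan is to reduce this uniform-in-$x$ statement to a finite collection of pointwise estimates via a Glivenko--Cantelli style monotonicity argument. Set
\[
G_M(x)=Q^{-1}b^d\sum_{j>x/b}N_j,\qquad G(x)=\int_x^\infty\phi(y)\,dy,
\]
so that the assertion to prove is $\sup_{x\in[x_1,x_2]}|G_M(x)-G(x)|<\varepsilon$ with $\mathsf{P}_M$-probability at least $1-\varepsilon$. The decisive structural feature is that, for every realization of $\{N_j\}$, the function $G_M$ is nonincreasing in $x$ (it loses the term $N_j$ exactly when $x$ crosses~$bj$), while $G$ is continuous and nonincreasing on~$[x_1,x_2]$.

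Using uniform continuity of $G$, I would first pick a finite partition $x_1=y_0<y_1<\dotsb<y_K=x_2$ with $G(y_{k-1})-G(y_k)<\varepsilon/3$ for each~$k$, and set $l_k=\lfloor y_k/b\rfloor+1$ so that $G_M(y_k)=Q^{-1}b^d\sum_{j\ge l_k}N_j$. Applying Corollary~\ref{cor1} with $l=l_k$ and multiplying by $Q^{-1}b^d$ shows that, with $\mathsf{P}_M$-probability at least $1-C_s\overline{N}^{-s}$,
\[
\Abs{G_M(y_k)-Q^{-1}b^d\sum_{j\ge l_k}\overline{N}_j}\le Q^{-1}b^d\Delta.
\]
Using $b^d\overline{N}\to Q\Gamma(d)\zeta(d)$ from~\eqref{p0}, a direct check shows $b^d\Delta\to 0$ in either regime of~\eqref{f8}: it is of order $\overline{N}^{-1/2}(\ln\overline{N})^{1/2}\chi(\overline{N})$ for $d>2$ and of order $\overline{N}^{1/d-1}\ln\overline{N}\chi(\overline{N})$ for $1<d\le 2$. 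On the other hand, the fourth line of~\eqref{p0}, combined with $bl_k=y_k+O(b)$ and continuity of $y\mapsto\int_y^\infty\phi$, gives $Q^{-1}b^d\sum_{j\ge l_k}\overline{N}_j=G(y_k)+o(1)$. Together these ingredients yield
\[
\mathsf{P}_M\bigl(|G_M(y_k)-G(y_k)|>\varepsilon/3\bigr)\longrightarrow 0\qquad\text{as }M\to\infty,
\]
for each fixed $k$.

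For the bootstrap, any $x\in[y_{k-1},y_k]$ satisfies, by monotonicity,
\[
G_M(y_k)-G(y_{k-1})\le G_M(x)-G(x)\le G_M(y_{k-1})-G(y_k).
\]
The right-hand side equals $(G_M(y_{k-1})-G(y_{k-1}))+(G(y_{k-1})-G(y_k))$ and the left-hand side equals $(G_M(y_k)-G(y_k))-(G(y_{k-1})-G(y_k))$; hence on the event that $|G_M(y_j)-G(y_j)|\le\varepsilon/3$ for every $j=0,\dotsc,K$, the supremum $\sup_{x\in[x_1,x_2]}|G_M(x)-G(x)|$ is at most $2\varepsilon/3<\varepsilon$. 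The probability of the complementary event is bounded by $(K+1)\max_k\mathsf{P}_M(|G_M(y_k)-G(y_k)|>\varepsilon/3)$, which is smaller than $\varepsilon$ for $M\ge M_0$. The one nonroutine point is to confirm that the $o(b^{-d})$ remainder in the fourth line of~\eqref{p0} is \emph{uniform} in $l$ as $bl$ ranges over the compact subinterval $[x_1,x_2]\subset(0,\infty)$; this should be inherited from the Euler--Maclaurin reasoning behind Proposition~\ref{euler1}, since $\phi$ is smooth on $(0,\infty)$ and decays exponentially at infinity.
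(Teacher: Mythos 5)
Your proof is correct, but it follows a genuinely different route from the paper's. The paper observes that $G_M(x)=Q^{-1}b^d\sum_{j>x/b}N_j$ is piecewise constant in $x$, so the supremum over $[x_1,x_2]$ is in fact a maximum over the $O(1/b)=O(\overline{N}^{1/d})$ integer lattice points in $(x_1/b,x_2/b)$; it then applies Corollary~\ref{cor1} at each of these points and union-bounds, paying the factor $\overline{N}^{1/d}$. This works because Theorem~\ref{t1} delivers a bound $C_s\overline{N}^{-s}$ with $s$ \emph{arbitrary}; taking $s=1$ already gives $\overline{N}^{-1+1/d}\to0$ since $d>1$. Your argument instead exploits the monotonicity of both $G_M$ and $G$ in a Glivenko--Cantelli / Dini fashion, reducing the supremum to a \emph{fixed} (depending only on $\varepsilon$, not on $M$) mesh $\{y_k\}_{k=0}^K$ on which $G$ varies by less than $\varepsilon/3$, and then union-bounds over this fixed finite set. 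The practical trade-off: the paper's method is shorter and exploits the structure of $G_M$ directly, whereas yours only uses that the pointwise deviation probability is $o(1)$; any rate, however weak, would suffice, so the sharpness of Theorem~\ref{t1} is not needed here. The uniformity of the $o(b^{-d})$ remainder in~\eqref{p0} that you flag is indeed required, and the paper's proof implicitly relies on the very same uniformity when it writes $r(M)\to0$ uniformly over $x\in[x_1,x_2]$; it follows from the Euler--Maclaurin error estimates in Proposition~\ref{euler} applied to the function $y\mapsto y^{d-1}/(e^y-1)$, whose derivative is absolutely integrable on $[x_1,\infty)$, so this is not a gap specific to your argument.
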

\begin{remark}
Note that the precise information on the convergence rate contained
in Theorem~\ref{t1} and Corollary~\ref{cor1} has been lost in
Corollaries~\ref{cor2} and~\ref{cor3}.
\end{remark}

\subsection{System with a fixed number of particles}\label{3.2}

Now let us consider systems in which two constraints, one on the
total energy and one on the number of particles, are given.

\subsubsection{Definition of the system}\label{3.2.1}
Let $M,N\ge0$ be integers. By~$\Omega_{M,N}$ we denote the set of all
sequences $\{N_j\}\equiv\{N_j\}_{j=0}^\infty$ of nonnegative
integers satisfying the conditions\footnote{It is merely a matter
of convenience that we have decided to start the indexing from
$j=0$, that is, have chosen zero for the ground energy level.
Should we wish to start from $j=1$, it suffices to change the
notation as follows: $\widetilde N=N$, $\widetilde M=M+N$, $\widetilde N_j=N_{j-1}$,
$j=1,2,\dotsc$. In terms of the variables with tildes, the sums
start from $j=1$.}
\begin{equation*}
    \sum_{j=0}^{\infty}N_j=N,\qquad \sum_{j=0}^{\infty} jN_j\le M.
\end{equation*}
Again, such sequences are finitely supported, and $\Omega_{M,N}$ is
finite. We take the same multiplicities $q_j>0$, $j=1,2,\dotsc$, as
in Sec.~\ref{3.1} and supplement them with some number $q_0\ge1$.
Next, we introduce the probability space
$\mathcal{X}_{M,N}=(\Omega_{M,N},\mathcal{F}_{M,N},\mathsf{P}_{M,N})$, where
$\mathcal{F}_{M,N}=2^{\Omega_{M,N}}$ and the probability $\mathsf{P}_{M,N}$ is defined
as follows:
\begin{equation}\label{v4}
    \mathsf{P}_{M,N}(\mathcal{A})=\frac{W(\mathcal{A})}{W(\Omega_{M,N})},
    \qquad \mathcal{A}\subset\Omega_{M,N},
\end{equation}
where, for every $\mathcal{A}\subset\Omega_{M,N}$, the weight
$W(\mathcal{A})$ is given by\footnote{We denote the weights in this
section by the capital letter $W$ so as to avoid confusion with the
weight $w$ introduced in Sec.~\ref{3.1} for sequences starting from
$j=1$.}
\begin{equation*}
    W(\mathcal{A})=\sum_{\{N_j\}\in\mathcal{A}}W(\{N_j\}),\quad
    W(\{N_j\})=\prod_{j=0}^{\infty}\binom{N_j+q_j-1}{N_j}.
\end{equation*}

\subsubsection{Limit distribution and Bose condensation}\label{3.2.2}

Let us study the behavior of random elements $\{N_j\}\in\Omega_{M,N}$
as $M\to\infty$ and $N\to\infty$. This problem involves two large
parameters $M$ and $N$ rather than one, and it is natural to expect
that the answer is more complicated than in the case of one large
parameter $M$, considered in Sec.~\ref{3.1}. It turns out that the
asymptotic behavior of our sequences $\{N_j\}$ strongly depend on
how the rates at which $M$ and $N$ tend to infinity are related.
Namely, let $\overline{N}=\overline{N}(M)$ be defined by~\eqref{f5} and~\eqref{f7}.
Recall that, by~\eqref{p0},
\begin{equation*}
    \overline{N}(M)=M^{\frac d{d+1}}Q^{\frac1{d+1}}\frac{\Gamma(d)\zeta(d)}
    {(\Gamma(d+1)\zeta(d+1))^{\frac d{d+1}}}(1+o(1)).
\end{equation*}
There are two possible types of asymptotic behavior of random
elements $\{N_j\}\in\Omega_{M,N}$ as $M,N\to\infty$ depending on
whether $N$ is smaller or greater than $\overline{N}$.
\begin{itemize}
    \item[(i)] If $N\le\overline{N}(M)$, then the limit distribution is the
    Bose--Einstein distribution
\begin{equation}\label{v7}
    \overline{N}_j=\frac{q_j}{e^{\beta j+\mu}-1},\qquad j=0,1,2,\dotsc\,,
\end{equation}
where the parameters $\beta,\mu>0$ are determined from the system of
equations
\begin{equation}\label{v7a}
    \sum_{j=0}^{\infty}\frac{q_j}{e^{\beta j+\mu}-1}=N,\qquad
    \sum_{j=0}^{\infty}\frac{jq_j}{e^{\beta j+\mu}-1}=M.
\end{equation}
    \item[(ii)] If $N>\overline{N}(M)$, then \textit{Bose condensation} occurs:
    the occupation numbers $\overline{N}_j$ with $j\ge1$ in the limit
    distribution no longer depend on $N$ and coincide with
    the numbers \eqref{f6}, while the excessive particles, however many,
    occupy the zero level,
\begin{equation}\label{v7b}
    \overline{N}_j=\frac{q_j}{e^{bj}-1},\quad j\ge1,\qquad
    \overline{N}_0=N-\sum_{j=1}^{\infty}\overline{N}_j=N-\overline{N}.
\end{equation}
\end{itemize}

Here we do not consider case~(i) in detail and refer the reader to
Theorem~5 in~\cite{MN2-2}, where the probabilities of deviations
from the limit distribution \eqref{v7}, \eqref{v7a} are estimated.

In case~(ii), the following theorem holds.

\begin{theorem}\label{t2}
Let the numbers $q_j$ satisfy \eqref{f1a}, and let $q_0\ge1$. Then
there exist constants $C_s>0$, $s=1,2,\dotsc$, such that the
estimates
\begin{equation}\label{v9}
    \mathsf{P}_{M,N}\biggl(\Abs{\sum_{j=0}^{\infty} f_j(N_j-\overline{N}_j)}
    >\Delta\biggr)\le C_s\overline{N}^{-s},
    \qquad s=1,2,\dots,
\end{equation}
where the $\overline{N}_j$ are defined in~\eqref{v7b} and $\Delta$
is the same as in Theorem~\textup{\ref{t1}}, hold for arbitrary
$M>0$ and $N>\overline{N}(M)$ and an arbitrary sequence $\{f_j\}$,
$j=0,1,2\dots$, of complex numbers satisfying the condition $\sup_j
\abs{f_j}\le1$.
\end{theorem}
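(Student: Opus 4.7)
\bigskip

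\noindent\emph{Proof proposal for Theorem~\ref{t2}.} The plan is to deduce Theorem~\ref{t2} from Theorem~\ref{t1} by directly comparing the measures $\mathsf{P}_{M,N}$ and $\mathsf{P}_M$. Since $\sum_{j\ge 0}(N_j-\overline{N}_j)=N-N=0$, one may subtract $f_0$ from every $f_j$ without changing the sum:
\[
\sum_{j\ge 0}f_j(N_j-\overline{N}_j)=\sum_{j\ge 1}(f_j-f_0)(N_j-\overline{N}_j)=2\sum_{j\ge 1}g_j(N_j-\overline{N}_j),
\]
where $g_j:=(f_j-f_0)/2$ satisfies $|g_j|\le 1$. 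Thus the event in~\eqref{v9} is an event $\mathcal{A}$ depending only on the tail $\{N_j\}_{j\ge 1}\in\Omega_M$; applying Theorem~\ref{t1} to $\{g_j\}$ with $\chi$ replaced by $\chi/2$ (so that the threshold becomes $\Delta/2$) yields $\mathsf{P}_M(\mathcal{A})\le C_s\overline{N}^{-s}$.

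Next, the projection $\{N_j\}_{j\ge 0}\mapsto\{N_j\}_{j\ge 1}$ is a bijection from $\Omega_{M,N}$ onto the subset $\{\{N_j\}_{j\ge 1}\in\Omega_M:S\le N\}$, where $S:=\sum_{j\ge 1}N_j$ and $N_0$ is recovered by $N_0=N-S$; the weights transform by the factor $h(S):=\binom{N-S+q_0-1}{q_0-1}$, giving
\[
\mathsf{P}_{M,N}(\mathcal{A})=\frac{\mathsf{E}_M[h(S)\mathbf{1}_{\mathcal{A}}\mathbf{1}_{\{S\le N\}}]}{\mathsf{E}_M[h(S)\mathbf{1}_{\{S\le N\}}]}.
\]
For the numerator I split $\{S\le N\}$ according to $\Lambda:=\{|S-\overline{N}|\le\Delta\}$: on $\Lambda$ one has $h(S)\le c_{q_0}(N-\overline{N}+\Delta)^{q_0-1}$, while on $\Lambda^c$ the crude bound $h(S)\le c_{q_0}(N+1)^{q_0-1}$, combined with Corollary~\ref{cor1} (which gives $\mathsf{P}_M(\Lambda^c)\le C_s\overline{N}^{-s}$), is enough. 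For the denominator, restriction to $\Lambda\cap\{S\le\lfloor\overline{N}\rfloor\}$ yields $h(S)\ge c'_{q_0}\max(N-\overline{N},1)^{q_0-1}$ on that set.

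The main technical obstacle is the lower bound $\mathsf{P}_M(S\le\lfloor\overline{N}\rfloor)\ge c$ by a positive constant $c$ independent of $M$, which is required to complete the denominator estimate but which Theorem~\ref{t1} alone does not supply: it is essentially the CLT-type statement that $\overline{N}$ is close to a median of $S$ under $\mathsf{P}_M$. Granting it, combining the above bounds gives
\[
\mathsf{P}_{M,N}(\mathcal{A})\le C''_{s,q_0}\Bigl(\frac{N+\Delta+1}{\max(N-\overline{N},1)}\Bigr)^{q_0-1}\overline{N}^{-s}\le C'''_{s,q_0}\,\overline{N}^{q_0-1-s},
\]
the last inequality following from the elementary estimate $(N+\Delta+1)/\max(N-\overline{N},1)\le C\overline{N}$ (treat $N\le 2\overline{N}$ and $N>2\overline{N}$ separately and use $\Delta=o(\overline{N})$). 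Since $s$ in Theorem~\ref{t1} may be taken arbitrarily large, this implies~\eqref{v9} for all $s=1,2,\dotsc$.
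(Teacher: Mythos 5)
Your proposal follows essentially the same route as the paper: replace the $\{f_j\}$ by a shifted sequence to kill $f_0$, compare $\mathsf{P}_{M,N}$ with $\mathsf{P}_M$ via the forgetful projection $\{N_j\}_{j\ge 0}\mapsto\{N_j\}_{j\ge 1}$ (the paper's map $\tau$), and control the Radon--Nikodym factor $h(S)=\binom{N-S+q_0-1}{q_0-1}$. The arithmetic you carry out with the ratio $(N+\Delta+1)/\max(N-\overline{N},1)\le C\overline{N}$ is correct and, granting the one step you flag, does give the theorem.

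However, the step you flag is a genuine gap, and it is the crux of the matter. Theorem~\ref{t1} and Corollary~\ref{cor1} only give \emph{upper} bounds on the probability of \emph{two-sided} deviations $\lvert S-\overline{N}\rvert>\Delta$; they carry no information about how the mass of $S$ is split on either side of $\overline{N}$, so they cannot by themselves deliver $\mathsf{P}_M(S\le\lfloor\overline{N}\rfloor)\ge c$ (nor even a polynomial lower bound, which would also suffice since $s$ is free). The paper runs into exactly the same obstacle, but only in the narrow range $\overline{N}<N\le\overline{N}+\Delta$ (its Case~2), and there it resorts to Lemma~6 and formula~(40) of \cite{MN2-2}, which give the local lower bound $w(\Omega'_{M,\overline{N}})\ge Ce^{\mathcal{S}(M)}\overline{N}^{-m_0}$ for the weight of the fiber $\{S=\lfloor\overline{N}\rfloor,\ \sum jN_j=M\}$; this is precisely the ingredient your argument lacks. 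In the complementary range $N>\overline{N}+\Delta$ the paper sidesteps the issue by lower-bounding the denominator through the set $\{S\le N\}$ rather than $\{S\le\overline{N}\}$: since $\{S\le N\}\supset\{S\le\overline{N}+\Delta\}$, Corollary~\ref{cor1} alone shows this has $\mathsf{P}_M$-probability at least $\tfrac12$ for large $M$, and the loss from replacing $\min h$ by $1$ (or $\asymp(N-M)^{q_0-1}$ when $N\ge M$) is polynomial in $\overline{N}$. So if you adopt the paper's case split, your scheme needs no new ideas for $N>\overline{N}+\Delta$, but for $\overline{N}<N\le\overline{N}+\Delta$ you must supply a lower bound of the local-CLT type, e.g.\ by a saddle-point lower bound for $w(\Omega'_{M,\lfloor\overline{N}\rfloor})$ analogous to Proposition~\ref{p1} but with the extra constraint $\sum N_j=\lfloor\overline{N}\rfloor$; this is what the cited Lemma~6 of \cite{MN2-2} does.
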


In the same way as in Sec.~\ref{3.1.2}, we obtain the following
corollary.
\begin{corollary}\label{cor4}
Under the assumptions of the theorem, for arbitrary integer $l\ge0$
one has
\begin{equation*}
    \mathsf{P}_M\biggl(\Abs{\sum_{j=l}^{\infty}N_j-\sum_{j=l}^{\infty}\overline{N}_j}
               >\Delta\biggr)\le C_s\overline{N}^{-s},
    \qquad s=1,2,\dots.
\end{equation*}
\end{corollary}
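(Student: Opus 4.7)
The plan is to mimic the derivation of Corollary~\ref{cor1} from Theorem~\ref{t1}: substitute a particular indicator-type sequence $\{f_j\}$ into estimate~\eqref{v9} of Theorem~\ref{t2} and observe that the left-hand side then reduces exactly to the quantity appearing in the corollary.

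Concretely, fix an integer $l\ge0$ and set $f_j=0$ for $0\le j<l$ and $f_j=1$ for $j\ge l$. This sequence trivially satisfies $\sup_j|f_j|\le1$, so Theorem~\ref{t2} is applicable. With this choice, the inner sum in~\eqref{v9} collapses to
\begin{equation*}
    \sum_{j=0}^{\infty}f_j(N_j-\overline{N}_j)=\sum_{j=l}^{\infty}(N_j-\overline{N}_j)=\sum_{j=l}^{\infty}N_j-\sum_{j=l}^{\infty}\overline{N}_j,
\end{equation*}
where the splitting of the sum is legitimate because $\{N_j\}$ is finitely supported and the series $\sum_{j=l}^{\infty}\overline{N}_j$ converges absolutely (in view of \eqref{f6} and the polynomial growth~\eqref{f1a} of $q_j$ against the exponential factor $e^{bj}$). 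Plugging this identity into~\eqref{v9} yields precisely the asserted bound, with the same constants $C_s$ as in Theorem~\ref{t2}.

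There is essentially no obstacle here: the entire argument is a one-line specialization of Theorem~\ref{t2}, parallel to how Corollary~\ref{cor1} was deduced from Theorem~\ref{t1}. The only thing to check is that the bound is uniform in $l$, which is automatic since the constants $C_s$ in Theorem~\ref{t2} are independent of the sequence $\{f_j\}$ (as long as $\sup_j|f_j|\le1$), and hence independent of $l$.
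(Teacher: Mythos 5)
Your proof is correct and matches the paper's own derivation exactly: the paper says Corollary~\ref{cor4} is obtained ``in the same way as in Sec.~3.1.2,'' i.e.\ by the very specialization $f_j=0$ for $j<l$, $f_j=1$ for $j\ge l$ in Theorem~\ref{t2} that you carry out. (Incidentally, the $\mathsf{P}_M$ in the corollary's display is evidently a typo for $\mathsf{P}_{M,N}$, as your use of Theorem~\ref{t2} implicitly corrects.)
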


In particular, this proves Theorem~\ref{t0}. Indeed, to derive the
estimate in that theorem from Corollary~\ref{cor4}, it suffices to
set $l=1$ and notice that
\begin{equation*}
    \abs{N_0-(N-\overline{N})}=\Abs{\sum_{j=1}^{\infty} N_j-\sum_{j=1}^{\infty}\overline{N}_j}.
\end{equation*}
\begin{remark}
One could also state the counterparts of Corollaries~\ref{cor2}
and~\ref{cor3}; we do not dwell upon this.
\end{remark}

\section{Proofs}\label{s4}

In this section, the letter $C$ is used to denote various
\textit{positive} constants independent of $N$, $b$, $M$, etc.
These constants are not assumed to be the same in all formulas! If
we need to keep track of several constants simultaneously, we equip
$C$ with subscripts. We also widely use the following standard
notation: we write $f\asymp g$ if the ratio $f/g$ is bounded above
and below by positive constants; in other words, $f$ and $g$ never
have opposite signs, $f=O(g)$, and $g=O(f)$.

\subsection{Proof of Theorem~\ref{t1}}\label{4.1}
Instead of~\eqref{f9}, it suffices to prove that (the modulus sign
is removed)
\begin{equation}\label{f9a}
    \mathsf{P}_M\biggl(\sum_{j=1}^{\infty} f_j(N_j-\overline{N}_j)>\Delta\biggr)\le C_s\overline{N}^{-s},
    \qquad s=1,2,\dots,
\end{equation}
where the $f_j$ are assumed to be real. Then we obtain \eqref{f9}
for real $f_j$ (with the constants $C_s$ multiplied by~$2$) by
combining \eqref{f9a} with the similar inequality where each $f_j$
has the same modulus and the opposite sign, and finally reach the
case of complex $f_j$ using Pythagoras' theorem (with further
increase in the constants). So we concentrate on the proof of
\eqref{f9a}.

By~\eqref{f4}, the probability on the left-hand side in \eqref{f9a}
is given by
\begin{equation*}
    \mathsf{P}_M\biggl(\sum_{j=1}^{\infty} f_j(N_j-\overline{N}_j)>\Delta\biggr)
    =\frac{w(\Omega_M(\Delta))}{w(\Omega_M)},
\end{equation*}
where $\Omega_M(\Delta)\subset\Omega_M$ is the set of all sequences
$\{N_j\}$ for which
\begin{equation}\label{f9e}
   \sum_{j=1}^{\infty} f_j(N_j-\overline{N}_j)>\Delta.
\end{equation}
To prove \eqref{f9a}, we will obtain a lower bound for $w(\Omega_M)$
and an upper bound for $w(\Omega_M(\Delta))$.

\subsubsection{A lower bound for $w(\Omega_M)$}
It is not so easy to estimate $w(\Omega_M)$ directly. Instead, we will
estimate the weight $w(\Omega_M^0)$, where $\Omega_M^0\subset\Omega_M$ is
the subset formed by the sequences for which $\sum_{j=1}^{\infty} jN_j=M$ (i.e.,
equality takes place in~\eqref{f2}). This weight obviously does not
exceed $w(\Omega_M)$. First, let us write out an exact formula for
$w(\Omega_M^0)$. Let
\begin{equation}\label{f13}
    F(z)=\sum_{M=0}^\infty w(\Omega_M^0)z^M
\end{equation}
be the generating function of the numbers $w(\Omega_M^0)$.
\begin{lemma}\label{le1}
The series \eqref{f13} converges in the disk $\{\abs{z}<1\}$, and
the sum is given by the formula
\begin{equation*}
    F(z)=\prod_{j=1}^{\infty}\frac{1}{(1-z^j)^{q_j}}.
\end{equation*}
\end{lemma}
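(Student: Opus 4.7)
The plan is to recognize $F(z)$ as a product of negative-binomial generating series, one factor per energy level $j$, and then match the coefficient of $z^M$ after expansion to the combinatorial sum defining $w(\Omega_M^0)$.

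First, I would recall the generalized negative-binomial identity
\begin{equation*}
    \frac{1}{(1-y)^{q}}=\sum_{n=0}^\infty\binom{n+q-1}{n}y^n,
    \qquad \abs{y}<1,
\end{equation*}
valid for every real $q>0$ (with $\binom{n+q-1}{n}$ understood as the generalized binomial coefficient defined in the footnote to \eqref{f3}). Substituting $y=z^j$, one has
\begin{equation*}
    \frac{1}{(1-z^j)^{q_j}}=\sum_{N_j=0}^\infty\binom{N_j+q_j-1}{N_j}z^{jN_j},\qquad \abs{z}<1.
\end{equation*}

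Next I would form the infinite product of these series for $j=1,2,\dotsc$ and verify absolute convergence for $\abs{z}<1$. Using the estimate $q_j\le B_2 j^{d-1}$ from \eqref{f1}, the logarithm of the product $\prod_{j=1}^\infty(1-\abs{z}^j)^{-q_j}$ is bounded by
\begin{equation*}
    \sum_{j=1}^\infty q_j\sum_{k=1}^\infty\frac{\abs{z}^{jk}}{k}
    \le\sum_{j=1}^\infty q_j\frac{\abs{z}^j}{1-\abs{z}^j}
    \le B_2\sum_{j=1}^\infty\frac{j^{d-1}\abs{z}^j}{1-\abs{z}},
\end{equation*}
which is finite. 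Hence the partial products converge absolutely in $\{\abs{z}<1\}$ and one may freely multiply the series and rearrange.

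Carrying out the multiplication, a term in the expansion is indexed by a finitely supported sequence $\{N_j\}$ of nonnegative integers and equals $\bigl(\prod_{j=1}^\infty\binom{N_j+q_j-1}{N_j}\bigr)z^{\sum_j jN_j}$. Grouping terms by the value $M=\sum_{j=1}^\infty jN_j$, the coefficient of $z^M$ becomes exactly
\begin{equation*}
    \sum_{\{N_j\}:\,\sum jN_j=M}\prod_{j=1}^\infty\binom{N_j+q_j-1}{N_j}
    =w(\Omega_M^0),
\end{equation*}
by the definition of $\Omega_M^0$ and \eqref{f3}. This simultaneously shows that \eqref{f13} converges in $\{\abs{z}<1\}$ and that its sum is the claimed product.

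The only real point requiring care is the absolute-convergence/rearrangement step, since we are multiplying infinitely many power series each of which is an infinite sum; once this is justified using \eqref{f1} as above, the identification of coefficients is automatic. No circularity arises because the power-series coefficients $\binom{N_j+q_j-1}{N_j}$ are nonnegative, so the double series has positive terms and Tonelli-style rearrangement applies directly.
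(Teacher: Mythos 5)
Your proof is correct and follows essentially the same route as the paper's: expand each factor via the generalized negative-binomial series, justify interchanging the infinite product with the sum using the polynomial growth bound~\eqref{f1}, and identify the coefficient of $z^M$ with $w(\Omega_M^0)$. You merely run the argument from product to series rather than from series to product, and you spell out the ``routine estimates'' for absolute convergence that the paper leaves implicit.
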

This assertion is well known for integer $q_j$ (e.g., see
\cite{Ver96} and \cite[Chap.~1]{And76}), and the proof for
noninteger $q_j$ is also easy.\footnote{Here is the proof.
From~\eqref{f13}, using~\eqref{f3}, \eqref{f9d}, and the fact that
the sets $\Omega_M^0$ are disjoint, we obtain
\begin{equation*}
    F(z)
    =\sum_{M=0}^\infty
    \sum_{\{N_j\}\in\Omega_M^0}
         z^M\prod_{j=1}^{\infty}\binom{N_j+q_j-1}{N_j}
         =\sum_{\{N_j\}}\prod_{j=1}^{\infty}\biggl[\binom{N_j+q_j-1}{N_j} z^{jN_j}\biggr],
\end{equation*}
where the sum is taken over all finitely supported sequences
$\{N_j\}$ of nonnegative integers. Since the $q_j$ grow
polynomially by condition \eqref{f1}, it follows by routine
estimates that the sum and the product can be interchanged, and we
obtain
\begin{equation*}
    F(z)=\prod_{j=1}^{\infty}\biggl[\sum_{N_j=0}^\infty
         \binom{N_j+q_j-1}{N_j} z^{jN_j}\biggr]
         =\prod_{j=1}^{\infty}\frac{1}{(1-z^j)^{q_j}}
\end{equation*}
by the binomial series formula.}

Now we can express $w(\Omega_M^0)$ by the Cauchy formula
\begin{equation*}
    w(\Omega_M^0)=\frac1{2\pi i}\int_{\abs{z}=r}
             \frac{F(z)\,dz}{z^{M+1}}
            =\frac1{2\pi i}\int_{\abs{z}=r}
             \biggl[\prod_{j=1}^{\infty}\frac{1}{(1-z^j)^{q_j}}\biggr]\frac{dz}{z^{M+1}},
\end{equation*}
where $0<r<1$. The change of variables $z=e^{-\xi}$ yields
\begin{equation*}
    w(\Omega_M^0)=\frac1{2\pi i}\int_{\gamma_0}
               e^{\Phi(\xi)}\,d\xi,\qquad
    \Phi(\xi)=M\xi+\sum_{j=1}^{\infty} q_j\ln\frac1{1-e^{-j\xi}}.
\end{equation*}
Here the main branch of the logarithm is taken, and the integration
contour $\gamma_0$ is the circle $\{\operatorname{Re}\xi=-\ln r\}$ on the cylinder
$\mathbb{C}/2\pi i\mathbb{Z}$ with coordinate $\xi\mod2\pi i$. To estimate the
integral, we use a saddle-point argument
(cf.~\cite[Chap.~4]{Fed77}). Let us deform $\gamma_0$ into a contour
$\gamma^*$ on which
\begin{equation*}
    \min_\gamma\max_{\xi\in\gamma}\operatorname{Re}\Phi(\xi)
\end{equation*}
is attained, where the minimum is taken over all contours $\gamma$
lying in the right half-cylinder $\{\operatorname{Re}\xi>0\}$ and homotopic to
$\gamma_0$. We have
\begin{equation*}
    \operatorname{Re}\Phi(\xi)=M\operatorname{Re}\xi+\sum_{j=1}^{\infty} q_j\ln\frac1{\abs{1-e^{-j\xi}}}.
\end{equation*}
To find $\gamma^*$, the following lemma will be of help.
\begin{lemma}\label{le2}
\textup{(i)} For fixed $\operatorname{Re}\xi>0$, the maximum of $\operatorname{Re}\Phi(\xi)$ is
attained on the real axis.

\textup{(ii)} The minimum of $\operatorname{Re}\Phi(\xi)$ on the positive real axis
is attained at the point $\xi=b$, where $b$ is the root of
Eq.~\eqref{f5}.
\end{lemma}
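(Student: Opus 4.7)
My plan is to treat the two assertions in sequence, since both amount to direct manipulations of the explicit series defining $\operatorname{Re}\Phi(\xi) = M\operatorname{Re}\xi + \sum_{j=1}^\infty q_j\ln\frac{1}{|1-e^{-j\xi}|}$.

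For part (i), I would fix the real part $a=\operatorname{Re}\xi>0$, write $\xi=a+i\tau$ with $\tau\in(-\pi,\pi]$, and compare $|1-e^{-j\xi}|$ to $|1-e^{-ja}|$ termwise. The identity
\begin{equation*}
 |1-e^{-ja}e^{-ij\tau}|^2 = 1-2e^{-ja}\cos(j\tau)+e^{-2ja} \ge (1-e^{-ja})^2
\end{equation*}
(with equality iff $\cos(j\tau)=1$) shows that each summand $q_j\ln\frac{1}{|1-e^{-j\xi}|}$ is maximized at $\tau=0$, and since $M\operatorname{Re}\xi$ does not depend on $\tau$, the maximum of $\operatorname{Re}\Phi$ on the circle $\operatorname{Re}\xi=a$ is attained at $\xi=a$. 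Uniqueness modulo $2\pi i$ follows because equality for $j=1$ forces $\tau\in 2\pi\mathbb{Z}$, hence $\tau=0$ in the fundamental domain. The growth assumption \eqref{f1} on $q_j$ guarantees absolute convergence of the series, so termwise comparison is legitimate.

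For part (ii), on the positive real axis $\operatorname{Re}\Phi(\xi)=\Phi(\xi)$ with $\Phi$ real-analytic, and I would differentiate termwise — justified by uniform convergence on compact subsets of $(0,\infty)$, again via the polynomial growth \eqref{f1} — to obtain
\begin{equation*}
 \Phi'(\xi)=M-\sum_{j=1}^\infty\frac{jq_j}{e^{j\xi}-1},
 \qquad
 \Phi''(\xi)=\sum_{j=1}^\infty\frac{j^2 q_j e^{j\xi}}{(e^{j\xi}-1)^2}>0.
\end{equation*}
Strict positivity of $\Phi''$ shows that $\Phi$ is strictly convex on $(0,\infty)$, so it has at most one critical point, and the equation $\Phi'(\xi)=0$ is precisely \eqref{f5}. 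Existence of the critical point $\xi=b$ was noted in the footnote to \eqref{f5} (the sum in \eqref{f5} decreases monotonically from $+\infty$ to $0$ as $\xi$ ranges over $(0,\infty)$), which combined with strict convexity makes $\xi=b$ the unique minimizer.

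Neither step poses a real obstacle; the mildest subtlety is the termwise differentiation/comparison, which is handled once and for all by the bound $q_j=O(j^{d-1})$ and the exponential decay of $e^{-j\xi}$ uniformly on the relevant subsets, so there is no need to track these routine convergence verifications in detail.
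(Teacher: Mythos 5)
Your proof is correct, and part (ii) is essentially identical to the paper's argument (termwise differentiation, strict convexity via $\Phi''>0$, uniqueness of the critical point from \eqref{f5}). Part (i), however, takes a genuinely more elementary route than the paper does. You observe directly that
\begin{equation*}
\abs{1-e^{-ja-ij\tau}}^2-(1-e^{-ja})^2=2e^{-ja}\bigl(1-\cos(j\tau)\bigr)\ge 0,
\end{equation*}
which makes each summand $q_j\ln\frac{1}{\abs{1-e^{-j\xi}}}$ individually maximal at $\tau=0$; this is a clean and entirely self-contained verification of assertion (i). The paper instead proves a \emph{stronger, quantitative} version, inequality \eqref{f21}, namely
\begin{equation*}
\operatorname{Re}\Phi(\operatorname{Re}\xi)-\operatorname{Re}\Phi(\xi)\ge\frac15\sum_{j=1}^{\infty}q_je^{-j\operatorname{Re}\xi}\bigl(1-\cos(j\operatorname{Im}\xi)\bigr),
\end{equation*}
via $\ln v\ge 1-v^{-1}$ and several auxiliary elementary bounds. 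The reason the paper pays this extra price is that \eqref{f21} is needed again later: specialized to $\xi=b+i\varphi$ it becomes \eqref{f26}, which is the key tool for controlling the integrand in the zone $D_3$ away from the saddle. So while your argument fully establishes Lemma~\ref{le2} as stated, it does not by itself deliver the explicit lower bound on $\operatorname{Re}\Phi(\operatorname{Re}\xi)-\operatorname{Re}\Phi(\xi)$ that the saddle-point estimate downstream relies on; if you were to continue along this route, you would need to extract a comparable quantitative statement (your displayed identity in fact already gives $\operatorname{Re}\Phi(a)-\operatorname{Re}\Phi(\xi)=\tfrac12\sum_j q_j\ln\bigl(1+\tfrac{2e^{-ja}(1-\cos j\tau)}{(1-e^{-ja})^2}\bigr)$, from which a bound of the type \eqref{f21} can be recovered, so the gap is one of bookkeeping rather than of substance).
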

\begin{proof}
(i) In fact, a stronger assertion is true:\footnote{Indeed, the
left-hand side of \eqref{f21} can be expressed as
\begin{equation*}
    \operatorname{Re}\Phi(\operatorname{Re}\xi)-\operatorname{Re}\Phi(\xi)=\sum_{j=1}^{\infty}
    q_j\ln\frac{\abs{1-e^{-j\xi}}}{1-e^{-j\operatorname{Re}\xi}}.
\end{equation*}
Let $x=j\operatorname{Re} \xi>0$ and $y=j\operatorname{Im}\xi$. Since $\ln v\ge 1-v^{-1}$ for
$v>1$, we have
\begin{equation*}
    \ln\frac{\abs{1-e^{-x-iy}}}{1-e^{-x}}\ge
    1-\frac{1-e^{-x}}{\abs{1-e^{-x-iy}}}=
    \frac{\sqrt{1+\delta}-1}{\sqrt{1+\delta}},
\quad
     \delta=\frac{2e^{-x}}{(1-e^{-x})^2}(1-\cos y).
\end{equation*}
Next,
\begin{equation*}
 \frac{\sqrt{1+\delta}-1}{\sqrt{1+\delta}}\ge
    \frac{\sqrt{1+\omega}-1}{\sqrt{1+\omega}},\quad
    \omega=2e^{-x}(1-\cos y)\le\delta.
\end{equation*}
It remains to note that $\omega\in[0,4]$, so that
\begin{equation*}
    \frac{\sqrt{1+\omega}-1}{\sqrt{1+\omega}}\ge
    \frac{\sqrt{1+\omega}-1}{\sqrt5}=
    \frac{\omega}{2\sqrt{1+\theta\omega}\sqrt{5}}
    \ge\frac\omega{10}=\frac15e^{-x}(1-\cos y)
\end{equation*}
(where $\theta\in[0,1]$), and we arrive at~\eqref{f21}.}
if $\operatorname{Re}\xi>0$, then
\begin{equation}\label{f21}
    \operatorname{Re}\Phi(\operatorname{Re}\xi)-\operatorname{Re}\Phi(\xi)\ge\frac15\sum_{j=1}^{\infty}
    q_je^{-j\operatorname{Re}\xi}\bigl(1-\cos(j\operatorname{Im}\xi)\bigr).
\end{equation}
All terms on the right-hand side in \eqref{f21} are nonnegative,
and the first term ($j=1$) is strictly positive unless
$\operatorname{Im}\xi\equiv0\mod2\pi$. This proves~(i).

(ii) For real $\xi$, we have $\operatorname{Re}\Phi(\xi)=\Phi(\xi)$. Next,
\begin{equation}\label{f22}
    \Phi'(\xi)=M-\sum_{j=1}^{\infty}\frac{jq_j}{e^{j\xi}-1},\qquad
    \Phi''(\xi)=\sum_{j=1}^{\infty}\frac{j^2q_je^{j\xi}}{(e^{j\xi}-1)^2}.
\end{equation}
The function $\Phi'(\xi)$ has the unique positive zero $\xi=b$
(cf.~\eqref{f5}), and this zero is the unique point of strict
minimum of $\operatorname{Re}\Phi(\xi)$ on the positive real axis, because the
second derivative is strictly positive for real $\xi$. This
gives~(ii) and completes the proof of Lemma~\ref{le2}.
\end{proof}
It follows from Lemma~\ref{le2} that for $\gamma^*$ we can take the
circle $\{\operatorname{Re}\xi=b\}$ on the cylinder $\mathbb{C}/2\pi
i\mathbb{Z}$.\footnote{Indeed, let $\gamma$ be any contour homotopic to
$\gamma_0$. Then $\gamma$ necessarily contains a point $\eta$ of the real
axis, and
\begin{equation*}
    \max_{\xi\in\gamma^*}\operatorname{Re}\Phi(\xi)
    \stackrel{(i)}{=}\Phi(b)
    \stackrel{(ii)}\le\Phi(\eta)\le\max_{\xi\in\gamma}\operatorname{Re}\Phi(\xi),
\end{equation*}
which shows that $\gamma^*$ is a saddle-point contour.} Hence we write
\begin{equation}\label{f23}
    w(\Omega_M^0)=\frac1{2\pi i}\int_{\gamma^*}e^{\Phi(\xi)}\,d\xi
    =\frac{\Phi(b)}{2\pi}\int_{-\pi}^\pi e^{S(\varphi)}\,d\varphi,
\end{equation}
where
\begin{equation*}
    S(\varphi)=\Phi(b+i\varphi)-\Phi(b)=
    iM\varphi +\sum_{j=1}^{\infty} q_j\ln\frac{1-e^{-bj}}{1-e^{-bj-ij\varphi}}.
\end{equation*}

Let us estimate the integral in~\eqref{f23}. By construction, we
have $S(0)=0$, and
\begin{equation}\label{f26}
    \operatorname{Re} S(\varphi)\le-\frac15\sum_{j=1}^{\infty}
    q_je^{-bj}\bigl(1-\cos(j\varphi)\bigr)
\end{equation}
by~\eqref{f21}. Next,
\begin{equation}\label{f25}
    S'(0)=i\Phi'(b)=0,
    \qquad S''(0)=-\Phi''(b)=
    -\sum_{j=1}^{\infty}\frac{j^2q_je^{bj}}{(e^{bj}-1)^2}
\end{equation}
(cf.~\eqref{f22}), and finally,
\begin{equation}\label{f27}
    S'''(\varphi)=i\sum_{j=1}^{\infty} \frac{j^3q_j(e^{2(bj+ij\varphi)}+e^{bj+ij\varphi})}
    {(e^{bj+ij\varphi}-1)^3}.
\end{equation}
\begin{lemma}\label{le3}
There exist positive constants $c_1,c_2,c_3$ such that
\begin{equation}\label{f28}
    -c_1b^{-d-2}\le S''(0)\le -c_2b^{-d-2},\qquad
    \sup_\varphi\abs{S'''(\varphi)}\le c_3b^{-d-3}
\end{equation}
\end{lemma}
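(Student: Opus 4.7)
The plan is to reduce both claims in Lemma~\ref{le3} to elementary sum-to-integral comparisons by invoking the hypothesis $q_j \asymp j^{d-1}$ from \eqref{f1} and splitting each sum at the natural scale $j \sim 1/b$. The exponential and rational factors in \eqref{f25} and \eqref{f27} change regime at $bj \sim 1$, so it is convenient to split the resulting sums at $j_0 = \lfloor 1/b \rfloor$ and bound the two pieces separately.

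For the two-sided bound on $S''(0)$, I would work directly with the explicit series in \eqref{f25}. Using $B_1 j^{d-1} \le q_j \le B_2 j^{d-1}$, the task reduces to showing that $\sum_{j\ge 1} \frac{j^{d+1} e^{bj}}{(e^{bj}-1)^2} \asymp b^{-d-2}$. In the range $bj \le 1$, convexity gives $bj \le e^{bj}-1 \le (e-1)\,bj$, so the generic term is of order $j^{d-1}/b^2$, and the sum over $j \le 1/b$ contributes order $b^{-d-2}$. In the range $bj > 1$, one has $e^{bj}-1 \asymp e^{bj}$, so the generic term is of order $j^{d+1} e^{-bj}$, and the substitution $x = bj$ in the corresponding integral comparison again yields order $b^{-d-2}$. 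The condition $d > 1$ enters exactly to ensure convergence at the origin of $\int_0^\infty \frac{x^{d+1}e^x}{(e^x-1)^2}\,dx$.

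For the bound on $S'''(\varphi)$, the new ingredient I need is a $\varphi$-independent lower bound on the denominator in \eqref{f27}, which I would obtain from the one-line identity
\begin{equation*}
    \bigl|e^{bj+ij\varphi}-1\bigr|^2
    = e^{2bj} - 2e^{bj}\cos(j\varphi) + 1
    \ge (e^{bj}-1)^2.
\end{equation*}
Moving the modulus inside the sum in \eqref{f27} and using $\bigl|e^{bj+ij\varphi}\bigr|^k = e^{kbj}$ in the numerator, I then obtain
\begin{equation*}
    |S'''(\varphi)| \le 2\sum_{j=1}^\infty \frac{j^3 q_j e^{2bj}}{(e^{bj}-1)^3},
\end{equation*}
and the same split at $j_0 = \lfloor 1/b \rfloor$ as above produces terms of order $j^{d-1}/b^3$ for $bj \le 1$ and of order $j^{d+2} e^{-bj}$ for $bj > 1$, each piece contributing a total of order $b^{-d-3}$.

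The main obstacle is not conceptual but lies in the bookkeeping: one must ensure that all implicit constants in the Riemann-sum-to-integral comparisons are uniform in $b$ for $b$ in a right neighborhood of $0$, which is the only regime of interest since $b \to 0$ as $M \to \infty$. This uniformity is automatic once the integrals $\int_0^\infty \frac{x^{d+1}e^x}{(e^x-1)^2}\,dx$ and $\int_0^\infty \frac{x^{d+2}e^{2x}}{(e^x-1)^3}\,dx$ are recognized as convergent (the integrands behave like $x^{d-1}$ near $0$ because $d>1$, and decay exponentially at infinity), so beyond splitting and counting exponents there is no genuine analytic difficulty.
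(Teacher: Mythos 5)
Your proposal is correct, and the key first step — deriving $-S''(0)\asymp\sum_j j^{d+1}e^{bj}/(e^{bj}-1)^2$ from \eqref{f25} and \eqref{f1}, and $|S'''(\varphi)|\le C\sum_j j^{d+2}e^{2bj}/(e^{bj}-1)^3$ from \eqref{f27} via $|e^{bj+ij\varphi}-1|\ge e^{bj}-1$ — is exactly what the paper does. Where you diverge is in the second step: the paper simply quotes Proposition~\ref{euler1}(ii), proved in the Appendix via the Euler--Maclaurin formula (with an integration by parts against $d[x]$ to control the error), whereas you estimate the two sums by splitting at $j_0=\lfloor 1/b\rfloor$ and bounding each piece by a Riemann sum on its own scale, using $bj\le e^{bj}-1\le(e-1)bj$ in the near regime and $e^{bj}-1\asymp e^{bj}$ in the far regime. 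Your route is more self-contained and slightly more elementary; the paper's route is more systematic and, as a bonus, produces the exact leading constant $\int_0^\infty y^{d+1}e^y(e^y-1)^{-2}\,dy$ rather than mere two-sided bounds — though for Lemma~\ref{le3} only the order of magnitude is used, so nothing is lost. One small imprecision: the integrands behave like $y^{d-1}$ near the origin regardless of $d$, so integrability there requires only $d>0$, not $d>1$; the hypothesis $d>1$ is used elsewhere in the paper (e.g.\ for the convergence of $\int_0^\infty y^{d-1}/(e^y-1)\,dy$ in the first formula of \eqref{p0}), but for this lemma $d>1$ is more than you need, which does no harm.
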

\begin{proof}
We use inequalities \eqref{f1}. Then it follows from \eqref{f25}
and \eqref{f27} that
\begin{equation*}
      B_1\sum_{j=1}^{\infty}\frac{j^{d+1}e^{bj}}{(e^{bj}-1)^2}\le -S''(0)
         \le B_2\sum_{j=1}^{\infty}\frac{j^{d+1}e^{bj}}{(e^{bj}-1)^2}
\end{equation*}
and (since $\abs{e^{bj+ij\varphi}-1}\ge e^{bj}-1$)
\begin{equation*}
    \abs{S'''(\varphi)}\le B_2
    \sum_{j=1}^{\infty} \frac{j^{d+2}(e^{2bj}+e^{bj})}
    {(e^{bj}-1)^3}.
\end{equation*}
Now it remains to use Proposition \ref{euler1}, (ii)
in~Sec.~\ref{sapp}. This proves the lemma.
\end{proof}

We split the integration interval $[-\pi,\pi]$ in \eqref{f23} into
three zones,
\begin{gather*}
 D_1=\{\abs{\varphi}<\delta_1b^{1+d/3}\},\qquad
D_2=\{\delta_1b^{1+d/3}\le\abs{\varphi}\le \delta_2b\},\\
D_3=\{\delta_2b\le\abs{\varphi}\le\pi\}.
\end{gather*}
Here $\delta_1$ and $\delta_2$ are sufficiently small positive constants,
independent of $b$, to be chosen later. Let us represent $S(\varphi)$
by Taylor's formula with remainder of order~3,
\begin{equation*}
    S(\varphi)=\frac12S''(0)\varphi^2+R_3(\varphi),\quad\text{where}\quad
    \abs{R_3(\varphi)}\le\frac{c_3}6b^{-d-3}\abs{\varphi}^3
\end{equation*}
by the second inequality in \eqref{f28}. In particular,
\begin{equation}\label{f34}
    \abs{R_3(\varphi)}\le \frac{c_3\delta_1^3}6 \qquad\text{in $D_1$.}
\end{equation}
Take $\delta_3$ so small that the right-hand side in \eqref{f34} is
smaller than $\pi/4$. Then
\begin{equation*}
    c_4e^{-c_5b^{-d-2}\varphi^2}\le\operatorname{Re}
    e^{S(\varphi)}\le c_6e^{-c_7b^{-d-2}\varphi^2} \qquad\text{in $D_1$}
\end{equation*}
for some positive constants $c_4,\dots,c_7$. We have
\begin{equation*}
    \int_{D_1}e^{-cb^{-d-2}\varphi^2}\,d\varphi=
    b^{d/2+1}\int_{-\delta_1b^{-1/6}}^{\delta_1b^{-1/6}}e^{-cy^2}\,dy\asymp
    b^{d/2+1},
\end{equation*}
and hence
\begin{equation*}
    \int_{D_1}e^{S(\varphi)}\,d\varphi\asymp
    b^{d/2+1}.
\end{equation*}
Now consider the zone $D_2$. There we have
\begin{equation*}
\begin{split}
    \operatorname{Re} S(\varphi)&\le -(c_2b^{-d-2}-c_3b^{-d-3}\abs{\varphi})\varphi^2
    \\
    &\le
    -(c_2-\delta_2c_3)b^{-d-2}\varphi^2\le
    -(c_2-\delta_2c_3)b^{-d/3}\delta_1^2,
\end{split}
\end{equation*}
and if we take $\delta_2$ small enough that $c_8=c_2-\delta_2c_3>0$, then
the integral over $D_2$ decays exponentially (at the rate of
$e^{-c_8b^{-d/3}}$) as $b\to0$.

Finally, consider the zone $D_3$. Here we use inequality
\eqref{f26}. Since all terms in the series on the right-hand side
in \eqref{f26} are nonnegative, we can drop some terms and write
\begin{equation*}
    \operatorname{Re} S(\varphi)\le-\frac15\sum_{x_1<bj<x_2}
    q_je^{-bj}\bigl(1-\cos(j\varphi)\bigr)
\end{equation*}
with some positive $x_1$ and $x_2$ to be chosen later.
By~\eqref{f1}, $q_je^{-bj}\ge c_9b^{-d+1}$ for $x_1<bj<x_2$, where
the constant $c_9$ depends on $x_1$ and $x_2$, and so
\begin{equation*}
    \operatorname{Re} S(\varphi)\le-\frac{c_9b^{-d+1}}5
    \sum_{x_1<bj<x_2}\bigl(1-\cos(j\varphi)\bigr).
\end{equation*}
Using \cite[1.341.3]{GrRy63}, we obtain
\begin{equation}\label{f41}
\begin{split}
    \sum_{x_1<bj<x_2}&\bigl(1-\cos(j\varphi)\bigr)
    \\&=j_2-j_1+1+\frac{\cos\bigl((j_1+j_2)\varphi/2\bigr)
    \sin\bigl((j_2-j_1+1)\varphi/2\bigr)}
    {\sin(\varphi/2)}\\
    &\ge b^{-1}(x_2-x_1)-1-\abs{\sin(\varphi/2)}^{-1},
\end{split}
\end{equation}
where $j_1$ and $j_2$ are the first and the last integer,
respectively, in the interval $(x_1/b,x_2/b)$. Since $\varphi\in D_3$,
we have $\abs{\sin(\varphi/2)}\ge\sin(\delta_2b/2)\ge\delta_2b/\pi$, and so
the sum \eqref{f41} is greater than $c_{10}b^{-1}$ provided that we
take a sufficiently large $x_2-x_1$. Thus, we see that
\begin{equation*}
   \operatorname{Re} S(\varphi)\le c_{11}b^{-d}\qquad\text{in $D_3$},
\end{equation*}
so that the integral over $D_3$ also decays exponentially as
$b\to0$. Now we summarize the preceding and see that we have proved
the following assertion:
\begin{proposition}\label{p1}
There exists a constant $C>0$ such that
\begin{equation*}
    w(\Omega_M)\ge Cb^{d/2+1}e^{\mathcal{S}(M)},
\end{equation*}
where
\begin{equation}\label{f44}
    \mathcal{S}(M)=bM +\sum_{j=1}^{\infty} q_j\ln\frac1{1-e^{-bj}}
\end{equation}
and $b$ is related to $M$ by formula~\eqref{f5}.
\end{proposition}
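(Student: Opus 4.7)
The plan is to assemble the three zone estimates on $D_1$, $D_2$, $D_3$ already established above into a single lower bound for the contour integral in~\eqref{f23}, and then use the trivial inclusion $\Omega_M^0\subset\Omega_M$ to transfer the bound from $w(\Omega_M^0)$ to $w(\Omega_M)$.

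First, I would observe that by the definitions of $\Phi$ and $\mathcal{S}$ one has $\Phi(b)=\mathcal{S}(M)$, so~\eqref{f23} reads
\begin{equation*}
w(\Omega_M^0)=\frac{e^{\mathcal{S}(M)}}{2\pi}\int_{-\pi}^{\pi}e^{S(\varphi)}\,d\varphi.
\end{equation*}
The left-hand side is a nonnegative real number, and the integrand enjoys the conjugation symmetry $\overline{e^{S(\varphi)}}=e^{S(-\varphi)}$ (inherited from the form of $S$), so the integral on the right is automatically real and equals $\int_{-\pi}^{\pi}\operatorname{Re} e^{S(\varphi)}\,d\varphi$. Thus it suffices to lower-bound this real integral by $Cb^{d/2+1}$.

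Second, I would stitch together the three zone estimates as follows. On $D_1$, the two-sided Gaussian bound $\operatorname{Re} e^{S(\varphi)}\asymp e^{-cb^{-d-2}\varphi^2}$ has been obtained, and the rescaling $\varphi=b^{1+d/2}y$ converts $\int_{D_1}\operatorname{Re} e^{S(\varphi)}\,d\varphi$ into $b^{d/2+1}\int_{|y|\le\delta_1 b^{-d/6}}e^{-cy^2}\,dy\asymp b^{d/2+1}$, since the range of $y$ grows to all of $\mathbb{R}$ as $b\to 0$. On $D_2$ and $D_3$, the already-proved exponential-decay bounds show that $|\int_{D_k}e^{S(\varphi)}\,d\varphi|\le 2\pi e^{-cb^{-\alpha}}$ for some $\alpha>0$, which is $o(b^{d/2+1})$ as $b\to 0$ and hence cannot offset the contribution from $D_1$. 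Therefore $\operatorname{Re}\int_{-\pi}^{\pi}e^{S(\varphi)}\,d\varphi\ge C'b^{d/2+1}$ for $b$ small enough, say $b\le b_0$. The complementary regime $b\ge b_0$ (i.e., $M$ in a bounded range) is handled trivially: $w(\Omega_M)\ge 1$ because the empty sequence lies in $\Omega_M$ and~\eqref{f3} assigns it weight~$1$, while $b^{d/2+1}e^{\mathcal{S}(M)}$ remains bounded above, so the inequality of the proposition holds on this compact region with a possibly smaller constant.

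Finally, combining the above with $w(\Omega_M)\ge w(\Omega_M^0)$ (which is immediate from positivity of weights and $\Omega_M^0\subset\Omega_M$) yields the claim. I expect essentially no new difficulty at this step: the genuine work has already been dispatched in Lemmas~\ref{le2} and~\ref{le3} and in the $D_3$ argument based on~\cite[1.341.3]{GrRy63}; the only subtlety worth guarding against is the transition between the asymptotic regime $b\to 0$ and the bounded regime, which the separate compactness argument above handles cleanly.
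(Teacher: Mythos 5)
Your proposal is correct and follows essentially the same route as the paper: it assembles the already-established zone estimates on $D_1$, $D_2$, $D_3$, observes that $D_1$ contributes $\asymp b^{d/2+1}$ while $D_2\cup D_3$ contributes only an exponentially small error, and passes from $w(\Omega_M^0)$ to $w(\Omega_M)$ by inclusion. The two small additions you make — noting the conjugation symmetry to identify the integral with $\int\operatorname{Re} e^{S(\varphi)}\,d\varphi$, and treating the bounded-$M$ (i.e.\ $b\ge b_0$) regime by a separate compactness argument — are both sound and, if anything, tighten up points the paper leaves implicit (you also correctly get the rescaled range $|y|\le\delta_1 b^{-d/6}$, where the paper's displayed bound $b^{-1/6}$ appears to be a typographical slip).
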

This is the desired lower bound for $w(\Omega_M)$.

\subsubsection{An upper bound for $w(\Omega_M(\Delta))$}

It follows from \eqref{f2} and \eqref{f9e} that if
$\{N_j\}\in\Omega_M(\Delta)$ and $c\ge0$, then
\begin{equation}\label{f45}
    b\biggl(M-\sum_{j=1}^{\infty} jN_j\biggr)+c\sum_{j=1}^{\infty} f_j(N_j-\overline{N}_j)-c\Delta\ge0.
\end{equation}
Accordingly, the exponential of the left-hand side of \eqref{f45}
is greater than $1$ for any $\{N_j\}\in\Omega_M(\Delta)$; of course,
it is positive for any (finitely supported) $\{N_j\}$. Hence
\begin{equation}\label{f46}
\begin{split}
    w(\Omega_M(\Delta))&\le\sum_{\{N_j\}}
    \biggl[e^{b\bigl(M-\sum_{j=1}^{\infty} jN_j\bigr)+c\sum_{j=1}^{\infty} f_j(N_j-\overline{N}_j)-c\Delta}
    w(\{N_j\}\\
    &=e^{bM-c\Delta-c\sum_{j=1}^{\infty} f_j\overline{N}_j}
    \sum_{\{N_j\}}
    \prod_{j=1}^{\infty}\biggl[\binom{N_j+q_j-1}{N_j}
    e^{(-bj+cf_j)N_j}\biggr],
\end{split}
\end{equation}
where the sum extends over all finitely supported sequences
$\{N_j\}$ of nonnegative integers. Note that if $c\le b/2$, then
\begin{equation*}
    e^{-bj+cf_j}<\bigl(e^{-b/2}\bigr)^j
\end{equation*}
(recall that $\abs{f_j}<1$), and hence the series on the right-hand
side in~\eqref{f46} is dominated by the series~\eqref{f13} with
$z=e^{-b/2}<1$, which is convergent by Lemma~\ref{le1}. Arguing as
in the proof of Lemma~\ref{le1}, we interchange the sum and the
product and use the binomial formula to obtain
\begin{equation*}
\begin{split}
    w(\Omega_M(\Delta))&\le
    e^{bM-c\Delta-c\sum_{j=1}^{\infty} f_j\overline{N}_j}
    \prod_{j=1}^{\infty}\biggl[\sum_{N_j=0}^\infty\binom{N_j+q_j-1}{N_j}
    e^{(-bj+cf_j)N_j}\biggr]\\
        &=
        e^{bM-c\Delta-c\sum_{j=1}^{\infty} f_j\overline{N}_j}
        \prod_{j=1}^{\infty}\frac{1}{(1-e^{-bj+cf_j})^{q_j}}\\
        &=\exp\biggl\{bM-c\Delta
        +\sum_{j=1}^{\infty} q_j\biggl[\ln\frac{1}{1-e^{-bj+cf_j}}
        -\frac{cf_j}{e^{bj}-1}\biggr]\biggr\}.
\end{split}
\end{equation*}

By Taylor's formula with remainder,
\begin{equation*}
    \ln\frac{1}{1-e^{-bj+cf_j}}=
    \ln\frac{1}{1-e^{-bj}}
    +\frac{cf_j}{e^{bj}-1}
    +\frac{(cf_j)^2}2
    \frac{e^{bj-\theta_j cf_j}}{(e^{bj-\theta_j cf_j}-1)^2},
\end{equation*}
where $\theta_j\in[0,1]$. Since $\abs{f_j}\le1$ and $c\in[0,b/2]$,
we obtain, transposing the term $cf_j(e^{bj}-1)^{-1}$,
\begin{equation*}
\begin{split}
    \ln\frac{1}{1-e^{-bj+cf_j}}
        -\frac{cf_j}{e^{bj}-1}&\le \ln\frac{1}{1-e^{-bj}}
        +\frac{c^2}2
    \frac{e^{b(j+1/2)}}{(e^{b(j-1/2)}-1)^2}\\
    &\le \ln\frac{1}{1-e^{-bj}}
        +\frac{c^2}2
    \frac{e^be^{bj/2}}{(e^{bj/2}-1)^2},
\end{split}
\end{equation*}
and thus we arrive at the following assertion.
\begin{proposition}\label{p2}
One has the estimate
\begin{equation*}
    w(\Omega_M(\Delta))\le e^{\mathcal{S}(M)}e^{-c\Delta+c^2K},
    \qquad c\in[0,b/2],
\end{equation*}
where $\mathcal{S}(M)$ is given by~ \eqref{f44} and
\begin{equation*}
    K=\frac{e^b}2\sum_{j=1}^{\infty}\frac{q_je^{bj/2}}{(e^{bj/2}-1)^2}.
\end{equation*}
\end{proposition}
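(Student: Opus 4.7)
The plan is a Chernoff-style exponential weighting. Fix a free parameter $c\in[0,b/2]$. For any $\{N_j\}\in\Omega_M(\Delta)$, the quantity
\begin{equation*}
b\Bigl(M-\sum_{j=1}^\infty jN_j\Bigr)+c\sum_{j=1}^\infty f_j(N_j-\overline{N}_j)-c\Delta
\end{equation*}
is nonnegative, since the first term is $\ge 0$ by~\eqref{f2} and the remaining contribution is positive by~\eqref{f9e}. Exponentiating, one obtains a factor that is $\ge 1$ on $\Omega_M(\Delta)$ and positive on every finitely supported nonnegative integer sequence. Multiplying $w(\{N_j\})$ by this factor and extending the sum to all such sequences yields
\begin{equation*}
w(\Omega_M(\Delta))\le e^{bM-c\Delta-c\sum_j f_j\overline{N}_j}\sum_{\{N_j\}}\prod_{j=1}^\infty\binom{N_j+q_j-1}{N_j}e^{(-bj+cf_j)N_j}.
\end{equation*}

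The constraint $c\le b/2$ together with $|f_j|\le 1$ gives $e^{-bj+cf_j}\le e^{-bj/2}<1$, so the series is dominated term-by-term by the convergent one from Lemma~\ref{le1} evaluated at $z=e^{-b/2}$. I would justify the interchange of $\sum_{\{N_j\}}$ and $\prod_j$ exactly as in that lemma and then apply the binomial series to each inner sum. This reduces the right-hand side to
\begin{equation*}
\exp\Bigl\{bM-c\Delta+\sum_{j=1}^\infty q_j\Bigl[\ln\tfrac1{1-e^{-bj+cf_j}}-\tfrac{cf_j}{e^{bj}-1}\Bigr]\Bigr\}.
\end{equation*}

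It remains to compare each bracketed term with $\ln(1-e^{-bj})^{-1}$, so as to assemble $\mathcal{S}(M)$ and isolate a second-order correction. Viewing the bracket as a function of $c$ and Taylor-expanding around $c=0$, the zeroth-order term is precisely $\ln(1-e^{-bj})^{-1}$, the first-order term equals $cf_j/(e^{bj}-1)$ and is cancelled by the definition of $\overline{N}_j$, and the Lagrange remainder takes the explicit form $\tfrac12(cf_j)^2\,e^{bj-\theta_jcf_j}/(e^{bj-\theta_jcf_j}-1)^2$ for some $\theta_j\in[0,1]$. Using $|cf_j|\le c\le b/2$ to bound the numerator by $e^{b(j+1/2)}$ and the denominator from below by $(e^{b(j-1/2)}-1)^2$, and then invoking the elementary monotonicity of $t\mapsto t/(t-1)^2$ on $(1,\infty)$, I would clean this estimate up to $\tfrac12 c^2 e^b\,e^{bj/2}/(e^{bj/2}-1)^2$; multiplying by $q_j$ and summing over $j$ produces the claimed $c^2K$.

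The main obstacle is really just keeping the Taylor remainder in a form whose $q_j$-weighted sum is finite and independent of the $f_j$. The role of the constraint $c\le b/2$ is simultaneously to ensure convergence of the interchanged product and to provide a uniform bound on the remainder denominator; verifying this uniformity is the one nontrivial technical step. Finiteness of $K$ itself is automatic, since the polynomial growth \eqref{f1} of $q_j$ is swamped by the exponential decay of the summand.
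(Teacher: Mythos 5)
Your argument reproduces the paper's proof step for step: the Chernoff-type exponential weighting with parameter $c$, the extension of the sum to all finitely supported sequences, the domination by $F(e^{-b/2})$ to justify the interchange, the reduction to $\prod_j(1-e^{-bj+cf_j})^{-q_j}$, the Taylor expansion in $c$ around $0$ with Lagrange remainder, and the replacement of $bj-\theta_jcf_j$ by $b(j\pm1/2)$. The only cosmetic difference is that you justify the final simplification $e^{b(j+1/2)}/(e^{b(j-1/2)}-1)^2\le e^b e^{bj/2}/(e^{bj/2}-1)^2$ by writing the left side as $e^b\cdot g(e^{b(j-1/2)})$ with $g(t)=t/(t-1)^2$ decreasing on $(1,\infty)$ and $e^{b(j-1/2)}\ge e^{bj/2}$ for $j\ge1$; the paper states this inequality without comment, so your monotonicity observation fills in a detail rather than changing the route.
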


\subsubsection{Completion of the proof}

By combining Propositions~\ref{p1} and~\ref{p2}, we obtain
\begin{equation*}
    \mathsf{P}_M\biggl(\sum_{j=1}^{\infty} f_j(N_j-\overline{N}_j)>\Delta\biggr)\le
    Cb^{-1-d/2}e^{-c\Delta+c^2K},
    \qquad c\in[0,b/2].
\end{equation*}
Now we will prove that there exists a $c\in[0,b/2]$ such that
\begin{equation}\label{f52}
    e^{-c\Delta+c^2K}\le C_s\overline{N}^{-s},\qquad
    s=1,2,\dotsc\,.
\end{equation}
Then the assertion of the theorem readily follows, since $b\asymp
N^{-1/d}$.

We consider three cases.

\textbf{1.} $d>2$. In this case, $K<Cb^{-d}$ by
Proposition~\ref{euler1}, (ii) in Sec.~\ref{sapp}, and so $K\le
C\overline{N}$. Set
\begin{equation*}
    c=\frac1{2C}\overline{N}^{-1/2}\sqrt{\ln\overline{N}}\chi(\overline{N}).
\end{equation*}
Since $\overline{N}\asymp b^{-d}$, it follows that $c<b/2$ for sufficiently
large $M$. Next,
\begin{equation*}
    c\Delta-c^2K\ge \frac1{4C}\ln\overline{N}\chi(\overline{N}),
\end{equation*}
and \eqref{f52} holds.

\textbf{2.} $d=2$. In this case, $K\le Cb^{-2}\abs{\ln b}$
(Proposition~\ref{euler1}, (ii)) and hence $K\le C\overline{N}\ln\overline{N}$. Set
$c=b/2$. Then
\begin{equation*}
    c\Delta-c^2K\ge \frac b2\sqrt{\overline{N}}\ln\overline{N}\chi(\overline{N})
    -C\frac{b^2}{4}\overline{N}\ln\overline{N}.
\end{equation*}
The first term is of the order of $\ln\overline{N}\chi(\overline{N})$, and the
subtrahend is of the smaller order of $\ln\overline{N}$ , so that \eqref{f52}
again holds.

\textbf{3.} $1<d<2$. In this case, $K\le C b^{-2}$
(Proposition~\ref{euler1}, (ii)), and hence $K\le C\overline{N}^{2/d}$. We
again take $c=b/2$. Then
\begin{equation*}
    c\Delta-c^2K\ge \frac b2\overline{N}^{1/d}\ln\overline{N}\chi(\overline{N})
    -C\frac{b^2}{4}\overline{N}^{1/d}.
\end{equation*}
The first term is of the order of $\ln\overline{N}\chi(\overline{N})$, and the
subtrahend is $O(1)$, so that \eqref{f52} again holds.

The proof of Theorem \ref{t1} is complete. \qed

\subsection{Proof of Corollary~\ref{cor2}}

The proof of this corollary is based on the following lemma.
\begin{lemma}\label{le4}
One has
\begin{equation}\label{y0}
    \langle\phi-\phi_M,f\rangle
    =b^{-d}Q^{-1}\sum_{j=1}^{\infty}(N_j-\overline{N}_j)f(bj)+R_1(M)+R_2(M),
\end{equation}
where $R_1(M)$ is a random variable on $\Omega_M$ such that
\begin{equation}\label{y1}
    \abs{R_1(M)}\le Cb^{d+1}\biggl(\overline{N}-\sum_{j=1}^{\infty} N_j\biggr)
\end{equation}
and $R_2(M)$ is a \textup(nonrandom\textup) function such that
\begin{equation*}
    R_2(M)\to0\qquad\text{as}\quad M\to\infty.
\end{equation*}
\end{lemma}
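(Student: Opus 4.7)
The plan is to expand both pairings and match term by term, extracting the claimed discrete sum as the main term and organizing the quadrature errors into the random part $R_1$ and the deterministic part $R_2$.

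For $\langle\phi_M,f\rangle$: since $\phi_M$ is piecewise constant with value $Q^{-1}b^{d-1}N_j$ on $[b(j-1),bj)$, one has
\begin{equation*}
\langle\phi_M,f\rangle = Q^{-1}b^{d-1}\sum_{j=1}^{\infty}N_j\int_{b(j-1)}^{bj}f(x)\,dx.
\end{equation*}
Since $f'$ is uniformly bounded, Taylor expansion at $bj$ yields $\int_{b(j-1)}^{bj}f(x)\,dx=bf(bj)+\epsilon_j$ with $\abs{\epsilon_j}\le(b^2/2)\sup\abs{f'}$ uniformly in $j$. This produces the main discrete sum involving $N_j f(bj)$, together with a quadrature residue $E_M$ whose modulus is at most $Cb^{d+1}\sum_j N_j$.

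For $\langle\phi,f\rangle$: I will approximate the integral $\int_0^{\infty}\phi(x)f(x)\,dx$ by the Riemann sum $b\sum_{j\ge1}\phi(bj)f(bj)$ with step $b$. The identity $\overline{N}_j=q_j/(e^{bj}-1)$ combined with the hypothesis $q_j=Qj^{d-1}(1+o(1))$ gives $b\phi(bj)=Q^{-1}b^{d}\overline{N}_j(1+o(1)_{j\to\infty})$, so the Riemann sum is asymptotically the same as the corresponding sum with $\overline{N}_j f(bj)$. The discrepancy between the integral and the Riemann sum is a purely deterministic quantity that vanishes as $b\to 0$; this discrepancy, together with the negligible $o(1)$ correction above, constitutes $R_2(M)$.

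Subtracting the two expressions produces the main term $\sum_j(N_j-\overline{N}_j)f(bj)$ with its prefactor in $b$ and $Q^{-1}$, plus $R_2(M)$ and the residue $E_M$ from the first step. To finish the decomposition, I will write $\sum_j N_j=\overline{N}+(\sum_j N_j-\overline{N})$ inside $E_M$: the deterministic part is bounded by $Cb^{d+1}\overline{N}=O(b)\to 0$ and is absorbed into $R_2$, whereas the fluctuating part obeys $\abs{R_1}\le Cb^{d+1}\abs{\sum_j N_j-\overline{N}}$, which is~\eqref{y1}. The main technical obstacle will be making the Riemann-sum convergence $R_2(M)\to 0$ quantitative and uniform in $f$ (given $\sup_x(\abs{f}+\abs{f'})<\infty$): for $d>2$ this is elementary since $\phi$ is bounded and continuous, but in the singular regime $1<d\le 2$, where $\phi(x)\sim x^{d-2}$ at the origin, I would split the sum at a small cutoff, control the near-origin contribution using integrability of $\phi f$, and invoke Proposition~\ref{euler1} in the appendix for the tail estimate, as is done elsewhere in the paper.
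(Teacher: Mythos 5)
Your proposal follows the paper's argument closely: the Riemann-sum step for $\langle\phi,f\rangle$ is the Euler--Maclaurin estimate (Proposition~\ref{euler}, not Proposition~\ref{euler1}), your Taylor expansion of $\int_{b(j-1)}^{bj}f(x)\,dx$ is computationally identical to the paper's split $f=f_1+f_2$ with $f_1$ the piecewise-constant interpolant, and the final bookkeeping of the quadrature error into a deterministic $O(b)$ piece plus a random piece controlled by $\abs{\sum_j N_j-\overline{N}}$ matches the paper. One step worth making explicit is the absorption of the correction $\theta_j$ in $b\phi(bj)=Q^{-1}b^{d}\overline{N}_j(1+\theta_j)$ into $R_2(M)$: since $\theta_j\to0$ only as $j\to\infty$ (not uniformly), one needs the cutoff-at-$k$ argument the paper uses, bounding the head by $O(b^{d-1})$ and the tail by $\varepsilon\overline{N}b^{d}\asymp\varepsilon$, which is the actual crux rather than the near-origin singularity of $\phi$.
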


\begin{proof}
Since the function $f(x)$ is bounded together with the first
derivative and $\phi(x)$ is given by \eqref{fc3} with $d>1$, it
follows from the Euler--Maclaurin formula (Proposition~\ref{euler})
that
\begin{equation*}
   \langle\phi,f\rangle\equiv \int_0^\infty \phi(x)f(x)\,dx=b\sum_{j=1}^{\infty} \phi(bj)f(bj)+r(M),
\end{equation*}
where $r(M)\to0$ as $M\to\infty$. Next, by \eqref{f1a},
\begin{equation*}
    b\phi(bj)\equiv b\frac{(bj)^{d-1}}{e^{bj}-1}
    =b^dQ^{-1}\overline{N}_j(1+\theta_j),
\end{equation*}
where $\theta_j\to0$ as $j\to\infty$. We claim that
\begin{equation*}
    b^dQ^{-1}\sum_{j=1}^{\infty}\overline{N}_j\theta_jf(bj)\to0\qquad\text{as $M\to\infty$}.
\end{equation*}
Indeed, since $f$ is uniformly bounded, we have
\begin{equation*}
    \Abs{b^dQ^{-1}\sum_{j=1}^{\infty}\overline{N}_j\theta_jf(bj)}\le
    Cb^d\biggl(\sum_{j=1}^k\overline{N}_j\abs{\theta_j}
    +\sum_{j=k+1}^\infty\overline{N}_j\abs{\theta_j}\biggr),
\end{equation*}
where $k$ is arbitrary. Take $k$ so large that
$\abs{\theta_j}<\varepsilon$ for $j>k$. Then the second sum does
not exceed $\varepsilon\overline{N}$. For \textit{fixed} $k$, the
first sum does not exceed $C_1b^{-1}$ (where the constant depends
on $k$) in view of formula \eqref{f6} for $\overline{N}_j$. Thus,
the right-hand side does not exceed
$CC_1b^{d-1}+Cb^d\overline{N}\varepsilon$. Recall that
$\overline{N}\asymp b^{-d}$; hence the second term can be made as
small as desired by an appropriate choice of $\varepsilon$, and
then we take $M$ large enough (i.e., $b$ small enough) to ensure
that the first term is also small. We conclude that
\begin{equation}\label{y1.5}
    \langle\phi,f\rangle=b^dQ^{-1}\sum_{j=1}^{\infty} \overline{N}_jf(bj)+r_1(M),
\end{equation}
where $r_1(M)\to0$ as $M\to\infty$.

Now let us study $\langle\phi_M,f\rangle$. We represent $f$ in the
form $f(x)=f_1(x)+f_2(x)$, where $f_1(x)$ is the step function
taking the value $f(bj)$ on the interval $[b(j-1),bj)$ and
$f_2(x)=f(x)-f_1(x)$ satisfies the estimate $\abs{f_2(x)}\le Cb$,
since the derivative $f'(x)$ is uniformly bounded. Then, in view of
the definition of $\phi_M(x)$, we have
\begin{equation*}
    \langle\phi_M,f\rangle
    =\langle\phi_M,f_1\rangle+\langle\phi_M,f_2\rangle
    =b^dQ^{-1}\sum_{j=1}^{\infty} N_jf(bj)+R(M),
\end{equation*}
where
\begin{align*}
    \abs{R(M)}&\le Cb\int_0^\infty\phi_M(x)\,dx=
    Cb^{d+1}Q^{-1}\sum_{j=1}^{\infty} N_j=Cb^{d+1}Q^{-1}\sum_{j=1}^{\infty}\overline{N}_j
    \\&\qquad{}+Cb^{d+1}Q^{-1}\biggl(N-\sum_{j=1}^{\infty}\overline{N}_j\biggr)\equiv r_2(M)+r_3(M).
\end{align*}
Now we can set $R_1(M)=r_3(M)$ and $R_2(M)=r_1(M)+r_2(M)$. The
proof of the lemma is complete.
\end{proof}

We assume without loss of generality that $\abs{f(x)}$ is bounded
by~$1$.

It follows from Theorem~\ref{t1} and Lemma~\ref{le4} that for each
$\varepsilon>0$ there exists an $M_0=M_0(\varepsilon)$ such that
\begin{equation}\label{y2}
    \mathsf{P}_M(\abs{\langle\phi-\phi_M,f\rangle}>\varepsilon)\le CM^{-2} \qquad
    \text{for $M\ge M_0$}.
\end{equation}
Indeed, the third term on the right-hand side in~\eqref{y0} is
necessarily less than $\varepsilon/3$ in modulus for sufficiently large~$M$.
Let us study the first term. It is necessarily less than $\varepsilon/3$ in
modulus provided that
\begin{equation*}
   \Abs{\sum_{j=1}^{\infty} f(bj)(N_j-\overline{N}_j)}\le C_0\overline{N},
\end{equation*}
where $C_0$ is some constant (depending on $Q$ and on the constants
in the relation $\overline{N}\asymp b^{-d}$. If $M$ is large enough that
$C_0\overline{N}>\Delta$ (which can always be achieved, because $\Delta$
grows slower than $\overline{N}$ as $M\to\infty$), we can apply
Theorem~\ref{t1} with appropriate $s$ and conclude that
\begin{equation*}
    \mathsf{P}_M\biggl(\Abs{b^{-d}Q^{-1}\sum_{j=1}^{\infty}(N_j-\overline{N}_j)f(bj)}>\varepsilon/3\biggr)\le
    CM^{-2}.
\end{equation*}
The same argument applies to the second term on the right-hand side
in~\eqref{y0}, and we arrive at \eqref{y2} (with some new $C$ and
$M_0(\varepsilon)$).

Consider the product $\prod_{M=1}^\infty\mathcal{X}_M$ of the probability
spaces $\mathcal{X}_M$. The probability measure on this product will be
denoted by $\mathsf{P}$. Consider the event
\begin{equation*}
    A_{m\varepsilon}=\bigl\{\abs{\langle\phi-\phi_M,f\rangle}\le\varepsilon
    \quad\text{for all $M\ge m$}\bigr\}.
\end{equation*}
It follows from \eqref{y2} that
\begin{equation*}
    \mathsf{P}(A_{m\varepsilon})\ge 1-C\sum_{M=m}^\infty M^{-2}
\end{equation*}
for $m\ge M_0(\varepsilon)$. The series on the right-hand side converges,
and so
\begin{equation*}
    \mathsf{P}(A_{m\varepsilon})\to1\qquad \text{as $m\to\infty$}.
\end{equation*}
Next, for the probability of the desired convergence
$\langle\phi_M,f\rangle\to\langle\phi,f\rangle$ we have the
expression
\begin{equation*}
    \mathsf{P}(\langle\phi-\phi_M,f\rangle\to0)=
    \mathsf{P}(\forall\varepsilon>0\exists m\colon A_{m\varepsilon})=
    \mathsf{P}\biggl(\bigcap_{\varepsilon>0}\bigcup_{m>0}A_{m\varepsilon}\biggr).
\end{equation*}
Note that the sets $A_{m\varepsilon}$ are nested,
\begin{equation*}
    A_{m\varepsilon}\subset A_{m'\varepsilon}\qquad\text{for $m\le m'$},
\end{equation*}
and that the sets
\begin{equation*}
    B_\varepsilon=\bigcup_{m>0}A_{m\varepsilon}
\end{equation*}
are nested as well,
\begin{equation*}
    B_\varepsilon\subset B_{\varepsilon'}\qquad\text{for $\varepsilon\le\varepsilon'$}.
\end{equation*}
It follows that
\begin{equation*}
    \mathsf{P}(B_\varepsilon)=\lim_{m\to\infty}\mathsf{P}(A_{m\varepsilon})=1,\qquad
    \mathsf{P}(\langle\phi-\phi_M,f\rangle\to0)=\lim_{\varepsilon\to0}\mathsf{P}(B_\varepsilon)=1.
\end{equation*}
The proof of Corollary \ref{cor2} is complete. \qed

\subsection{Proof of Corollary~\ref{cor3}}

Using Proposition~\ref{euler1} and arguing as in the proof of
\eqref{y1.5}, we obtain
\begin{equation*}
    \int_x^{\infty}\phi(x)\,dx=b^dQ^{-1}\sum_{j>x/b} \overline{N}_j +r(M),
\end{equation*}
where $r(M)\to0$ as $M\to\infty$. Hence it suffices to prove that
\begin{equation*}
    P_M\biggl(\sup_{x\in[x_1,x_2]}\Abs{\sum_{j>x/b}(N_j-\overline{N}_j)}>\varepsilon
    C\overline{N}\biggr)\le\varepsilon,
\end{equation*}
where $C$ is a constant depending on $Q$ and the constants in the
relation $\overline{N}\asymp b^{-d}$. For sufficiently large $M$, we have $\varepsilon
C\overline{N}>\Delta$, and Theorem~\ref{t1} can be used to estimate the
probability for each fixed $x$. This probability is less that
$C_s\overline{N}^{-s}$ for all~$s=0,1,2,\dotsc$. The supremum is actually
taken over the finite set of integer points on the interval
$[x_1/b,x_2/b]$, and the number of these points is of the order of
$1/b$, i.e., of the order of $\overline{N}^{1/d}$. Thus, passing from
individual points to the supremum over $O(\overline{N}^{1/d})$ points, we
make the estimate of the probability slightly worse (by the factor
equal to the number of these points); i.e., the probability
estimate becomes $\widetilde C_s\overline{N}^{-s+1/d}$ with some new constants $\widetilde
C_s$. However, this does not matter, and we take $s=1$, which
provides the desired estimate:
\begin{equation*}
    P_M\biggl(\sup_{x\in[x_1,x_2]}\Abs{\sum_{j>x/b}(N_j-\overline{N}_j}>\varepsilon
    C\overline{N}\biggr)\le \widetilde C_1\overline{N}^{-1+1/d}\le \varepsilon
\end{equation*}
for sufficiently large $M$ (and hence $N$).

The proof of Corollary~\ref{cor3} is complete.

\subsection{Proof of Theorem~\ref{t2}}

Just as in the proof of Theorem~\ref{t1}, it suffices to show that
\begin{equation}\label{z1}
    \mathsf{P}_{M,N}\biggl(\sum_{j=0}^{\infty} f_j(N_j-\overline{N}_j)
    >\Delta\biggr)\le C_s\overline{N}^{-s},
    \qquad s=1,2,\dots,
\end{equation}
i.e, drop the modulus sign. Next, we will assume without loss of
generality that $f_0=0$. Indeed,
\begin{equation*}
    \sum_{j=0}^{\infty}N_j=\sum_{j=0}^{\infty}\overline{N}_j=N,
\end{equation*}
and hence
\begin{equation*}
    \sum_{j=0}^{\infty} f_j(N_j-\overline{N}_j)=\sum_{j=1}^{\infty} (f_j-f_1)(N_j-\overline{N}_j).
\end{equation*}
The new numbers $f_j'=f_j-f_1$ are bounded in absolute value by~$2$
rather than~$1$, but we can pass to the numbers $\widetilde f_j=f_j'/2$
and simultaneously divide the function $\chi(x)$ occurring in
definition of $\Delta$ by~$2$.

By definition \eqref{v4}, the probability on the left-hand side in
\eqref{z1} has the form
\begin{equation}\label{z2}
    \mathsf{P}_{M,N}\biggl(\sum_{j=1}^{\infty} f_j(N_j-\overline{N}_j)>\Delta\biggr)
    =\frac{W(\Omega_{M,N}(\Delta))}{W(\Omega_{M,N})},
\end{equation}
where $\Omega_{M,N}(\Delta)\subset\Omega_{M,N}$ is the set of all
sequences $\{N_j\}_{j=0}^\infty\in\Omega_{M,N}$ such that
\begin{equation}\label{z4}
    \sum_{j=1}^{\infty} f_j(N_j-\overline{N}_j)>\Delta.
\end{equation}

Consider the mapping
\begin{equation*}
    \tau\colon\Omega_{M,N}\longrightarrow\Omega_M
\end{equation*}
that takes each sequence
$\{N_j\}=\{N_j\}_{j=0}^\infty\in\Omega_{M,N}$ to the sequence
$\{N_j\}'=\{N_j\}_{j=1}^\infty\in\Omega_M$ obtained by throwing
away the first element~$N_0$. This mapping is one-to-one onto its
range, because $N_0=N-\sum_{j=1}^{\infty} N_j$ is uniquely
determined by $\{N_j\}'$. Since $f_0=0$, it follows that
\begin{equation*}
    \tau(\Omega_{M,N}(\Delta))\subset \Omega_M(\Delta).
\end{equation*}
(Inequality \eqref{z4} does not involve $N_0$.)

For each $\{N_j\}\in\Omega_{M,N}$, we have
\begin{equation*}
    W(\{N_j\})=\binom{N_0+q_0-1}{N_0} w(\{N_j\}').
\end{equation*}
Since, by assumption, $q_0>1$, we see that the binomial $\binom{N_0+q_0-1}{N_0}$
is a monotone increasing function of~$N_0$; by Stirling's formula,
\begin{equation*}
    \binom{N_0+q_0-1}{N_0}\asymp N_0^{q_0-1}.
\end{equation*}

Now we can write
\begin{align*}
    W(\Omega_{M,N}(\Delta))
    &=\sum_{\{N_j\}\in\Omega_{M,N}(\Delta)}
    \binom{N_0+q_0-1}{N_0} w(\{N_j\}')
    \\&\le
    \biggl\{\max_{\Omega_{M,N}(\Delta)}\binom{N_0+q_0-1}{N_0}\biggr\}
    \sum_{\{N_j\}\in\Omega_{M,N}(\Delta)}
    w(\{N_j\}')
    \\&\le
    CN^{q_0-1}w(\Omega_M(\Delta)).
\end{align*}
On the other hand,
\begin{align*}
    W(\Omega_{M,N})
    &=\sum_{\{N_j\}\in\Omega_{M,N}}
    \binom{N_0+q_0-1}{N_0} w(\{N_j\}')
    \\&\ge
    \biggl\{\min_{\Omega_{M,N}}\binom{N_0+q_0-1}{N_0}\biggr\}
    \sum_{\{N_j\}\in\Omega_{M,N}}
    w(\{N_j\}')
    \\&=
    \biggl\{\min_{\Omega_{M,N}}\binom{N_0+q_0-1}{N_0}\biggr\}w(\tau(\Omega_{M,N})).
\end{align*}

We have $N>\overline{N}$. Let us consider two cases, $N>\overline{N}+\Delta$ and $\overline{N}\le
N<\overline{N}+\Delta$.

\textbf{1.} Let $N>\overline{N}+\Delta$. The set
$\Omega_M\setminus\tau(\Omega_{M,N})$ consists of all sequences
$\{N_j\}\in\Omega_M$ for which
\begin{equation*}
    \sum_{j=1}^{\infty}(N_j-\overline{N}_j)=\sum_{j=1}^{\infty} N_j-\overline{N}>\Delta.
\end{equation*}
Hence
\begin{equation*}
    \frac{w(\Omega_M)-w(\tau(\Omega_{M,N}))}{w(\Omega_M)}\equiv
    \mathsf{P}_M(w(\Omega_M)-w(\tau(\Omega_{M,N})))\le C_s\overline{N}^{-s},
\end{equation*}
$s=1,2,\dotsc$, by Corollary \ref{cor1} (with $l=1$). It follows
that, for sufficiently large $M$ and $N\ge\overline{N}(M)+\Delta$,
\begin{equation*}
    W(\Omega_{M,N})\ge
    \frac12 \biggl\{\min_{\Omega_{M,N}}\binom{N_0+q_0-1}{N_0}\biggr\}w(\Omega_M)
\end{equation*}
and
\begin{multline}\label{x-files}
    \mathsf{P}_{M,N}\biggl(\sum_{j=1}^{\infty} f_j(N_j-\overline{N}_j)>\Delta\biggr)
    \\
    \le \frac{2CN_0^{q_0-1}}{\min_{\Omega_{M,N}}\binom{N_0+q_0-1}{N_0}}
    \mathsf{P}_M\biggl(\sum_{j=1}^{\infty} f_j(N_j-\overline{N}_j)>\Delta\biggr)
    \\
    \le C_s\frac{N_0^{q_0-1}}{\min_{\Omega_{M,N}}\binom{N_0+q_0-1}{N_0}}\overline{N}^{-s}.
\end{multline}
If $N<M$, then the minimum is attained at $N_0=0$ and is equal
to~$1$. If $N\ge M$, then the minimum is attained at  $N_0=N-M$,
and we have, by Stirling's formula,
\begin{equation*}
    \frac{N^{q_0-1}}{\min_{\Omega_{M,N}}\binom{N_0+q_0-1}{N_0}}\asymp
    \biggl(\frac{N}{N-M}\biggr)^{q_0-1}\le CM^{q_0-1}\le\widetilde
    CN^{(q_0-1)(d+1)/d}.
\end{equation*}
In any case, we obtain
\begin{equation*}
    \mathsf{P}_{M,N}\biggl(\sum_{j=1}^{\infty} f_j(N_j-\overline{N}_j)>\Delta\biggr)\le C_s\overline{N}^{-s}
\end{equation*}
(with some new constants $C_s$). This proves the theorem for this
case.

\textbf{2.} Let $\overline{N}<N\le\overline{N}+\Delta$. Here we need a finer argument.

We carry out the same computations for the upper bound of the
numerator, but to estimate the denominator, we write
\begin{equation*}
\begin{split}
    W(\Omega_{M,N})&\ge\binom{N-\overline{N}+q_0-1}{N-\overline{N}}
    \sum_{\{N_j\}'\in\Omega'_{M,\overline{N}}}w(\{N_j\})
    \\&=\binom{N-\overline{N}+q_0-1}{N-\overline{N}}w(\Omega'_{M,\overline{N}}),
\end{split}
\end{equation*}
where $\Omega'_{M,\overline{N}}\subset\Omega_M$ is the subset of sequences
satisfying the conditions
\begin{equation*}
    \sum_{j=1}^{\infty} N_j=\overline{N},\qquad\sum_{j=1}^{\infty} jN_j=M.
\end{equation*}
It follows from Lemma~6 and formula~(40) in~\cite{MN2-2} that, for
some $m_0$,
\begin{equation*}
    w(\Omega'_{M,\overline{N}})\ge Ce^{\mathcal{S}(M)}\overline{N}^{-m_0},
\end{equation*}
where $\mathcal{S}(M)$ is defined in \eqref{f44}. By combining this with
the estimate for $w(\Omega_M(\Delta))$ obtained in the proof of
Theorem~\ref{t1} (Proposition~\ref{p2}), we arrive at the desired
estimate.

The proof of Theorem~\ref{t2} is complete.

\section{Appendix}\label{sapp}

We shall use the following two versions of the Euler--Maclaurin
formula.
\begin{proposition}\label{euler}
Let $f(x)$ be a continuously differentiable function on
$(0,\infty)$ absolutely integrable together with $f'(x)$ at
infinity. Then the series $\sum_{j=1}^{\infty} f(j)$ converges absolutely, and its
sum can be computed by the formula
\begin{equation}\label{eu1}
    \sum_{j=1}^{\infty} f(j)=\int_1^\infty f(x)\,dx +R_1,
\end{equation}
where the remainder $R_1$ satisfies the estimate
\begin{equation*}
    \abs{R_1}\le\int_1^\infty\abs{f'(x)}\,dx.
\end{equation*}
If, in addition, $f'(x)$ is absolutely integrable at zero, then
\begin{equation}\label{eu2}
    \sum_{j=1}^{\infty} f(j)=\int_0^\infty f(x)\,dx +R_2,
\end{equation}
where the remainder $R_2$ satisfies the estimate
\begin{equation*}
    \abs{R_2}\le\int_0^\infty\abs{f'(x)}\,dx.
\end{equation*}
\end{proposition}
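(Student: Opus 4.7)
The plan is to prove each formula by the standard pairing of $f(j)$ with an integral of $f$ over a unit interval, controlling the gap via the fundamental theorem of calculus applied to $f'$. No deep tools are needed; the work is essentially careful bookkeeping.

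For \eqref{eu1}, I would associate to each $j\ge 1$ the interval $[j,j+1]$ and use $f(j)-f(x)=-\int_j^x f'(t)\,dt$ to obtain, for every $x\in[j,j+1]$, the pointwise bound $\abs{f(j)-f(x)}\le\int_j^{j+1}\abs{f'(t)}\,dt$. Integrating this inequality in $x$ over $[j,j+1]$ yields
\begin{equation*}
\Abs{f(j)-\int_j^{j+1}f(x)\,dx}\le\int_j^{j+1}\abs{f'(t)}\,dt,
\end{equation*}
and the absolute integrability of $f'$ at infinity gives
\begin{equation*}
\sum_{j=1}^{\infty}\Abs{f(j)-\int_j^{j+1}f(x)\,dx}\le\int_1^\infty\abs{f'(t)}\,dt<\infty.
\end{equation*}
Combined with $\sum_{j=1}^\infty\int_j^{j+1}\abs{f(x)}\,dx=\int_1^\infty\abs{f(x)}\,dx<\infty$, this yields absolute convergence of $\sum f(j)$; telescoping the partial sums against $\int_1^N f(x)\,dx$ and passing to the limit then gives \eqref{eu1} with the stated bound on $R_1$.

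For \eqref{eu2}, I would run the mirror argument: pair $f(j)$ with $\int_{j-1}^{j}f(x)\,dx$, use $f(j)-f(x)=\int_x^j f'(t)\,dt$ for $x\in[j-1,j]$ to obtain the analogous per-term bound by $\int_{j-1}^{j}\abs{f'(t)}\,dt$, and sum over $j\ge1$. The resulting total bound is $\int_0^\infty\abs{f'(t)}\,dt$, which is finite precisely because of the additional hypothesis that $f'$ is integrable near $0$.

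There is no genuine obstacle; the only point requiring slight care is ensuring absolute convergence of $\sum f(j)$ before rearranging it against the integral, and this is handled by the pointwise bound $\abs{f(j)}\le\int_j^{j+1}\abs{f(x)}\,dx+\int_j^{j+1}\abs{f'(t)}\,dt$ derived along the way.
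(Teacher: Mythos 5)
Your proof is correct. The paper's own argument is a one-liner via the Stieltjes integral representation $\sum_{j\ge1}f(j)=\int_0^\infty f(x)\,d[x]=-\int_1^\infty f(x)\,d[-x]$, followed by integration by parts against the sawtooth $x-[x]$ (resp.\ $x+[-x]$), whose boundedness by $1$ gives the stated bound on the remainder. Your route avoids Stieltjes integrals entirely: you compare $f(j)$ directly with $\int_j^{j+1}f$ (resp.\ $\int_{j-1}^{j}f$) term by term, estimate each discrepancy by $\int_j^{j+1}\abs{f'}$ (resp.\ $\int_{j-1}^j\abs{f'}$) via the fundamental theorem of calculus, and then sum. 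The two arguments are mathematically the same in substance --- your per-interval bound is exactly what integration by parts produces on each unit cell --- but yours is more elementary and makes the absolute convergence of $\sum f(j)$ explicit, which the paper leaves implicit. One small point you glossed over in the \eqref{eu2} case: the hypothesis only asserts that $f$ itself is integrable at infinity, so to make sense of $\int_0^\infty f$ you should note that integrability of $f'$ near $0$ forces $f$ to have a finite limit at $0^+$, hence $f$ is bounded and integrable on $(0,1]$ as well; with that remark in place your argument is complete.
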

\begin{proof}
The obtain the desired error estimates, one can use the Stieltjes
integral representations
\begin{equation*}
    \sum_{j=1}^{\infty} f(j)=\int_0^\infty f(x)\,d[x]=-\int_1^\infty f(x)\,d[-x],
\end{equation*}
where $[x]$ is the integer part of $x$, and then integrate by parts
to estimate the remainders.
\end{proof}

Let us use Proposition~\ref{euler} to carry out some computations
needed in the main text.

\begin{proposition}\label{euler1}
The following asymptotic formulas hold as $b\to+0$.

\textup{(i)} Let $s>0$. Then
\begin{equation*}
    \sum_{j=1}^{\infty} \frac{j^s}{e^{bj}-1}=b^{-s-1}\Gamma(s+1)\zeta(s+1)(1+o(1))
\end{equation*}
and, more generally,
\begin{equation*}
    \sum_{j=l}^{\infty} \frac{j^s}{e^{bj}-1}=
    b^{-s-1}\int_{bl}^\infty
    \frac{y^s\,dy}{e^y-1}+o(b^{-s-1}).
\end{equation*}

\textup{(ii)} Let $d>1$. Then
\begin{align*}
    \sum_{j=1}^{\infty}\frac{j^{d+1}e^{bj}}{(e^{bj}-1)^2}&=
    b^{-d-2}\int_{0}^\infty
    \frac{y^{d+1}e^y\,dy}{(e^y-1)^2}(1+o(1)),\\
     \sum_{j=1}^{\infty}\frac{j^{d+2}(e^{2bj}+e^{bj})}{(e^{bj}-1)^3}&=
    b^{-d-3}\int_{0}^\infty
    \frac{y^{d+2}(e^{2y}+e^{y})\,dy}{(e^y-1)^3}(1+o(1)),\\
    \sum_{j=1}^{\infty}\frac{j^{d-1}e^{bj/2}}{(e^{bj/2}-1)^2}&\le
    \begin{cases}
    Cb^{-d} &\text{if $d>2$,}\\
    Cb^{-2}\abs{\ln b} &\text{if $d=2$,}\\
    Cb^{-2}&\text{if $1<d<2$.}
    \end{cases}
\end{align*}
\end{proposition}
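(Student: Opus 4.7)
The unifying idea is that each sum, under the rescaling $y = bj$ (or $y = bj/2$ for the final estimate), becomes a Riemann-type approximation to an integral of the form $b^{-\alpha}\int_0^\infty g(y)\,dy$, where $g(y) = y^p P(e^y)/(e^y-1)^k$ decays exponentially at infinity. A single application of Proposition~\ref{euler} controls the discrepancy between sum and integral, with the error bounded by $\int |f'|\,dx$, which after rescaling is one power of $b$ smaller than the main term and therefore negligible. All the delicate work concerns the behavior of the integrand at the origin.

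For part~(i) with $f(x) = x^s/(e^{bx}-1) \sim x^{s-1}/b$ near zero: when $s>1$, $f'$ is also integrable at~$0$ and formula~\eqref{eu2} applies directly; after rescaling, the main integral is $b^{-s-1}\Gamma(s+1)\zeta(s+1)$ and the error is $O(b^{-s}) = o(b^{-s-1})$. When $0 < s \le 1$, I instead use~\eqref{eu1} starting from~$1$; the missing piece $\int_0^b y^s/(e^y-1)\,dy = O(b^s)$ contributes only $O(b^{-1})$ after the prefactor, again absorbed into the remainder. The cumulative version is obtained by applying Proposition~\ref{euler} to the sum from $j=l$, yielding $b^{-s-1}\int_{bl}^\infty y^s/(e^y-1)\,dy$ plus an error $O(b^{-s})$. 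The first two asymptotics of part~(ii) are similar: the limit integrands behave like $y^{d-1}$ near zero, so with $d>1$ both the integrands and their derivatives are absolutely integrable on $(0,\infty)$, and~\eqref{eu2} yields the stated $(1+o(1))$ expressions immediately after rescaling.

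The third estimate of part~(ii) is where a trichotomy emerges. The rescaling $y = bj/2$ converts the sum to a Riemann approximation of $\int y^{d-1}e^y/(e^y-1)^2\,dy$, whose integrand behaves like $y^{d-3}$ near zero. For $d>2$ the integral is finite on $(0,\infty)$ and one simply gets $Cb^{-d}$. For $d=2$ the integrand is $\sim y^{-1}$, and because the first term of the sum sits at $y \asymp b$, the truncated integral $\int_{b/2}^\infty y^{-1}\cdots\,dy$ diverges logarithmically, producing $Cb^{-2}|\ln b|$. For $1 < d < 2$, the same cutoff yields a finite residue $\asymp b^{d-2}/(2-d)$, which combines with the prefactor $b^{-d}$ to give $Cb^{-2}$. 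In each case the Euler--Maclaurin error is dominated by the main term.

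The main obstacle is matching the correct form of Euler--Maclaurin (starting at $0$ versus $1$) to the integrability of $f$ and $f'$ at the origin, and, especially in the third bound of~(ii), accounting for the fact that the limiting integral itself diverges at $y=0$, so the lower endpoint $y \asymp b$ has to be tracked precisely to produce the right $b^{-2}$ or $b^{-2}|\ln b|$ factor. The rest is mechanical rescaling.
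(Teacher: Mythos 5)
Your proposal is correct and follows essentially the same route as the paper: apply Proposition~\ref{euler}, rescale $y=bj$ (or $y=bj/2$), and choose between \eqref{eu1} and \eqref{eu2} depending on whether $f'$ is integrable at the origin, with the lower cutoff tracked explicitly when the limiting integrand is singular there. The paper omits the details of part~(ii) entirely (``the same scheme as for (i)''), so your account actually supplies the case analysis the paper leaves implicit; the trichotomy in the third estimate ($y^{d-3}$ near $0$, hence $b^{-d}$, $b^{-2}\abs{\ln b}$, or $b^{-2}$ according as $d>2$, $d=2$, or $1<d<2$) is exactly right. One small caveat: your opening sentence asserting that the Euler--Maclaurin error is uniformly ``one power of $b$ smaller than the main term'' is not literally true for $0<s\le1$ in~(i) or for $1<d<2$ in the last bound of~(ii), where main term and error can be of the same order; but since in those cases only an upper bound is claimed (or since the error is still $o$ of the main term as stated), the argument you then carry out case by case is sound.
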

\begin{proof}
(i) If $s>1$, then, by \eqref{eu2},
\begin{align*}
    \sum_{j=1}^{\infty} \frac{j^s}{e^{bj}-1}&=\int_0^\infty
    \frac{x^s\,dx}{e^{bx}-1}+R_2=
    b^{-s-1}\int_0^\infty
    \frac{y^s\,dy}{e^y-1}+R_2\\&=
    b^{-s-1}\Gamma(s+1)\zeta(s+1)+R_2
\end{align*}
(we have used \cite[3.411.1]{GrRy63}), where
\begin{equation*}
    \abs{R_2}\le
    b^{-s}\int_0^\infty\frac{sy^{s-1}(e^{y}-1)
    +y^se^y}{(e^{y}-1)^2}\,dy,
\end{equation*}
and we have the desired estimate, because the integral converges.
If $0<s\le1$, then this computation does not work, but we can use
\eqref{eu1} and write
\begin{align*}
    \sum_{j=1}^{\infty} \frac{j^s}{e^{bj}-1}&=\int_1^\infty
    \frac{x^s\,dx}{e^{bx}-1}+R_1=
    b^{-s-1}\int_b^\infty
    \frac{y^s\,dy}{e^y-1}+R_1\\
    &=
    b^{-s-1}\int_0^\infty
    \frac{y^s\,dy}{e^y-1}-
    b^{-s-1}\int_0^b
    \frac{y^s\,dy}{e^y-1}+R_1,
\end{align*}
where
\begin{equation*}
    \abs{R_1}\le
    b^{-s}\int_b^\infty\frac{sy^{s-1}(e^{y}-1)
    +y^se^y}{(e^{y}-1)^2}\,dy,
\end{equation*}
and so $\abs{R_1}\le Cb^{-1}$ for $s\in(0,1)$ and $\abs{R_1}\le
Cb^{-1}\abs{\ln b}$ for $s=1$. Now note that
\begin{equation*}
    \int_0^b
    \frac{y^s\,dy}{e^y-1}\le Cb^s,
\end{equation*}
and we obtain the desired estimate.

The estimate for the sum starting form $j=l$ can be obtained in a
similar way; one only need to shift the summation index by $l-1$.

(ii) The proof goes by the same scheme as for (i). We omit the
details.
\end{proof}
\begin{remark}
The integrals in (ii) can also be expressed via special functions,
but we do not need these expressions.
\end{remark}

\end{document}